\theoremstyle{plain}
\newtheorem{thm}[subsection]{Theorem}
\newtheorem{lem}[subsection]{Lemma}
\newtheorem{prop}[subsection]{Proposition}
\newtheorem{cor}[subsection]{Corollary}
\theoremstyle{definition}
\newtheorem{rk}[subsection]{Remark}
\newtheorem{definition}[subsection]{Definition}
\newtheorem{ex}[subsection]{Example}
\numberwithin{equation}{section}
\newcommand{\F}{{\mathcal F}}
\newcommand{\A}{{\mathcal A}}
\newcommand{\CC}{{\mathcal C}}
\newcommand{\LL}{{\mathcal L}}
\newcommand{\cP}{{\mathcal P}}
\newcommand{\V}{{\mathcal V}}
\newcommand{\al}{{\alpha}}
\newcommand{\Z}{\mathbb{Z}}
\newcommand{\R}{\mathcal{R}}
\newcommand{\C}{\mathbb{C}}
\newcommand{\K}{\mathbb{K}}
\newcommand{\PP}{\mathbb{P}}
\newcommand{\T}{\mathbb{T}}
\DeclareMathOperator{\Hom}{Hom}
\DeclareMathOperator{\supp}{supp}
\begin{document}

\title [Characteristic varieties for a class of line arrangements]
{Characteristic varieties  for a class of line arrangements  }

\author[Thi Anh Thu DINH ]{ Thi Anh Thu DINH }
\address{  Laboratoire J.A. Dieudonn\'e, UMR du CNRS 6621,
                 Universit\'e de Nice Sophia Antipolis,
                 Parc Valrose,
                 06108 Nice Cedex 02,
                 FRANCE.}
\email {dinh@unice.fr}

\subjclass[2000]{Primary 14C21, 14F99, 32S22 ; Secondary 14E05, 14H50.}

\keywords{local system, line arrangement, characteristic variety, resonance variety}

\begin{abstract}Let $\A$ be a line arrangement in  the complex projective plane $\PP^2$, having the points of multiplicity $\geq 3$ situated on two lines in $\A$, say $H_0$ and $H_{\infty}$. Then we show that the non-local irreducible components of the first resonance variety $\R_1(\A)$ are 2-dimensional and correspond to parallelograms $\cP$ in $\C^2=\PP^2 \setminus H_{\infty}$ whose sides are in $\A$ and for which $H_0$ is a diagonal.
\end{abstract}

\maketitle

\section{Introduction } \label{s0}

Let $\A$ be a line arrangement in the complex projective plane $\PP^2$ and denote by $M$ the corresponding
arrangement complement. It is classically known that the fundamental group of $M$ is abelian if and only if
the line arrangement $\A$ has only double points, see Theorem 1.1 in \cite{CDP} and the reference to Zariski's
work given there.

The main object of study of the paper \cite{CDP} is the next simplest case of a line arrangement, namely a line 
arrangement $\A$ where the points of multiplicity $\geq 3$ are situated on a line $H_0 \in \A$. If we take this line as the line at infinity, then this is the same as studying affine line arrangements in the plane $\C^2$
having only nodes. For this class of arrangements, call it $\CC_1$, one can compute explicitely the corresponding fundamental group $\pi_1(M)$ and the {\it characteristic varieties} $\V_k(M)$, see \cite{CDP}.
The features of these characteristic varieties, which are also denoted by $\V_k(\A)$, include

\medskip

\noindent (i) there are no translated components, and

\medskip

\noindent (ii) for any irreducible component $W$ of some characteristic variety $\V_k(M)$, the dimension of
$H^1(M,\LL)$ is constant for $\LL \in W \setminus \{1\}$.

\medskip

In the present paper we show that the description of the characteristic varieties $\V_k(M)$ can be pushed one step further, namely to the class $\CC_2$ of line arrangements $\A$ where the points of multiplicity $\geq 3$ are situated on two lines in $\A$. The corresponding characteristic varieties $\V_k(M)$ enjoy the
properties (i) and (ii) above. 

The deleted $B_3$-arrangement studied by A. Suciu for its translated component, see \cite{S1}, has 
the points of multiplicity $\geq 3$ situated on three lines in $\A$. This shows that in some sense our result is the best possible.

\medskip

In fact, to determine these varieties, we use a recent result by S. Nazir and Z. Raza \cite{NR} saying that in such a situation all rank one local systems are admissible. A consequence of this fact is that the properties (i) and (ii) hold, as shown in \cite{Dadm}. Therefore, the characteristic varieties $\V_k(M)$ are completely determined for this class of arrangements by the {\it resonance varieties} $\R_k(M)$, also denoted by $\R_k(\A)$. So the main technical point of this paper is a detailed description of these resonance varieties. Roughly speaking, our main result can be stated as follows. For a more precise statement, see Theorem \ref{t3}.

\begin{thm} \label{t0}

Let $\A$ be a line arrangement in  $\PP^2$, having the points of multiplicity $\geq 3$ situated on two lines in $\A$, say $H_0$ and $H_{\infty}$. Then the non-local irreducible components of  $\R_1(M)$ are 2-dimensional and correspond to parallelograms $\cP$ in $\C^2=\PP^2 \setminus H_{\infty}$ whose sides are in $\A$ and for which $H_0$ is a diagonal.

\end{thm}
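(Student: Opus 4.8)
The plan is to reduce the computation of $\R_1(\A)$ for an arrangement $\A$ in the class $\CC_2$ to the combinatorics of its intersection lattice, and then to identify which sublattices can support a nonlocal resonance component. First I would recall the general description of $\R_1(\A)$ due to Falk, Libgober, Yuzvinsky and others: every irreducible component of $\R_1(\A)$ is a linear subspace of $H^1(M,\C)\cong\C^{|\A|-1}$, and the nonlocal components come from \emph{multinets} (or, in the $2$-dimensional case, \emph{pencils}) on sub-arrangements of $\A$. Concretely, a $d$-dimensional component arises from a partition of a sub-arrangement $\A'\subseteq\A$ into $d+1$ classes such that every rank-$2$ flat meeting more than one class meets all of them with the same multiplicity count — a $(d+1)$-multinet. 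So the task becomes: classify the multinets supported on sub-arrangements of an arrangement whose points of multiplicity $\ge 3$ all lie on $H_0\cup H_\infty$.

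Next I would set up affine coordinates with $H_\infty$ the line at infinity, so $\A\setminus\{H_\infty\}$ is an affine arrangement in $\C^2$ all of whose multiple points lie on the single line $H_0$, except that lines may also be parallel (meet on $H_\infty$). I would analyze a putative multinet $(\A',\mathcal{N})$ with $k\ge 3$ classes. The key structural observation to extract is that because all "honest" multiple points sit on $H_0$ or at infinity, the base locus of the associated pencil is very constrained: a fiber of the pencil is a union of lines from one class, and two lines in different classes can only meet on $H_0$, at infinity, or at a double point of $\A$. Since a double point lies on exactly two lines, it cannot be a base point of a pencil with $\ge 3$ members unless those members... — so I would argue the base points must lie on $H_0\cup H_\infty$, and in fact that one fiber is $H_0$ together with (possibly) some lines through a single point of $H_0$, while the structure at infinity forces the remaining fibers to be pencils of parallel lines. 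Tracking the multiplicities through a rank-$2$ flat on $H_0$ versus at infinity should force $k=3$ and, after a short argument, $\A'$ to consist of $H_0$ plus two pairs of parallel lines — i.e., the four sides of a parallelogram $\cP$ together with the diagonal $H_0$ — with the three classes being $\{H_0\}$ and the two pairs of opposite sides. This identifies the nonlocal components with such parallelograms, and each is visibly $2$-dimensional.

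Conversely, given a parallelogram $\cP\subset\C^2$ with sides in $\A$ and $H_0$ a diagonal, I would exhibit the corresponding $2$-dimensional component directly: the three "fibers" $H_0$, (top + bottom side), (left + right side) all pass through the same two points (the two vertices of $\cP$ not on $H_0$ lie on... — more precisely the pencil of conics degenerating to these three line-pairs, or equivalently the obvious regular map $M\to\C\setminus\{0,1\}$), giving a $3$-net, hence a $2$-dimensional component of $\R_1$ by the Libgober–Yuzvinsky / Falk–Yuzvinsky correspondence. Then by the result of Nazir–Raza that all rank-one local systems on $M$ are admissible (so that, by \cite{Dadm}, properties (i) and (ii) hold and $\V_k(M)$ is determined by $\R_k(M)$), this completes the identification at the level of characteristic varieties as well.

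The main obstacle I anticipate is the forward direction's case analysis: ruling out multinets with $k\ge 4$ classes and, for $k=3$, showing the sub-arrangement is forced to be exactly the parallelogram-plus-diagonal rather than some larger configuration — one must carefully use that \emph{every} multiple point of $\A'$ (not just those visible in a smaller sub-picture) lies on $H_0\cup H_\infty$, and handle the interaction between genuine intersection points on $H_0$ and parallelism classes at infinity without double-counting. A secondary technical point is checking that these nonlocal components are disjoint (away from $1$) from the local components and from each other, which should follow from comparing their defining supports, but needs the precise multinet bookkeeping.
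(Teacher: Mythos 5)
Your strategy---invoking the Falk--Yuzvinsky correspondence between positive-dimensional components of $\R_1(\A)$ and multinets/pencils, and then classifying the multinets that can be supported on an arrangement whose multiple points all lie on $H_0\cup H_{\infty}$---is a legitimate route and genuinely different from the paper's, which never mentions multinets: the paper works directly in the Orlik--Solomon algebra, proving the Key Lemma (Lemma \ref{l1}) that forces $\alpha^{\perp}=\C\langle \alpha\rangle$ under mild support hypotheses, and then runs an exhaustive discussion of the possible supports of $\alpha$ to arrive at Theorem \ref{t3}. The problem is that in your write-up the classification itself, which is the entire content of the theorem, is only announced: ruling out multinets with four or more classes, multinets with nontrivial multiplicities, and $3$-multinets whose classes contain more than the expected lines is exactly where the work lies (it is the analogue of the paper's Cases A, B1a--B1b$''$, B2, B3), and you explicitly defer it (``tracking the multiplicities \dots should force $k=3$'', ``after a short argument'', and you list this forward direction as the main anticipated obstacle). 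As it stands this is a plan, not a proof.

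Moreover, the concrete net structure you assign to a parallelogram is incorrect, and this would derail the argument you sketch. For $\cP$ with sides $H_k\parallel H_l$, $H_p\parallel H_q$ and diagonal $H_0$, the relevant pencil of conics has base locus the two vertices of $\cP$ lying on $H_0$ together with the two points at infinity of the sides; its three completely reducible members pair the sides that are \emph{adjacent} at the two vertices off the diagonal, the third member being $H_0\cup H_{\infty}$. In the paper's Example \ref{ex3} the fibers are $H_1\cup H_4$, $H_2\cup H_3$ and $H_0\cup H_{\infty}$, so the classes are $\{H_1,H_4\}$, $\{H_2,H_3\}$, $\{H_0,H_{\infty}\}$ --- not ``$\{H_0\}$ and the two pairs of opposite sides'', and the three curves certainly do not all pass through the same two points (opposite sides are parallel and meet only at infinity). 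With your proposed partition the multinet axioms already fail at the two vertices of $\cP$ not on $H_0$: two lines from different classes meet there, but no line of the class of $H_0$ passes through that point. This is consistent with the equations in Theorem \ref{t3}, where the equalities $a_{H_k}=a_{H_q}$, $a_{H_l}=a_{H_p}$ identify coefficients along adjacent, not opposite, sides. Since your forward direction is supposed to force precisely this configuration by multiplicity bookkeeping, starting from the wrong model of the net leaves the promised short argument unavailable; the converse construction is repairable once the correct fibers are identified, but the classification step remains a genuine gap.
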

A moment thought shows that this statement is in fact symmetric with respect to the two lines  $H_0$ and $H_{\infty}$. That is, if we look for the parallelograms $\cP'$ in $\C^2=\PP^2 \setminus H_{0}$ whose sides are in $\A$ and for which $H_{\infty}$ is a diagonal, we get exactly the same family of parallelograms as
in Theorem \ref{t0}.

\begin{cor} \label{c0}

If $\A$ is a line arrangement in the classe $\CC_2$, then the Orlik-Solomon algebra $A(\A)$ determines the Tutte polynomials of the matroid associated to $\A$.

\end{cor}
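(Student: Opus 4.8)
The plan is to read off from the graded algebra $A(\A)$ the number $n=|\A|$ of lines together with the multiset $\M=\{\,m_P : P\ \text{a point of}\ \A,\ m_P\ge 3\,\}$ of its higher multiplicities, and then to observe that the Tutte polynomial $T_\A$ of the matroid of $\A$ is already a function of the pair $(n,\M)$. We may assume $\A$ is essential, the remaining cases (in particular that of a pencil) being trivial.

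The fact that $(n,\M)$ determines $T_\A$ is elementary. Writing
\[
T_\A(x,y)=\sum_{S\subseteq\A}(x-1)^{3-r(S)}(y-1)^{|S|-r(S)}
\]
and grouping by the rank $r$ and cardinality $s$ of $S$, it suffices to know each coefficient $N_{r,s}=\#\{S\subseteq\A : r(S)=r,\ |S|=s\}$. Now $N_{0,0}=1$; the rank-$1$ subsets are the $n$ singletons; a rank-$2$ subset of size $s\ge 2$ is a set of $s$ concurrent lines, so $N_{2,s}=\sum_P\binom{m_P}{s}$, which equals $\binom n2$ for $s=2$ and involves only the entries of $\M$ for $s\ge 3$; and $N_{3,s}=\binom ns-\sum_{r'<3}N_{r',s}$ for $s\ge 3$. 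Hence all $N_{r,s}$, and so $T_\A$, are functions of $n$ and $\M$.

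To recover $(n,\M)$ from $A(\A)$, note first that $n=\dim_\C A^1$, and that $\dim_\C A^1$ and $\dim_\C A^2$ together determine $\sum_P(m_P-1)$ (the sum over all points of $\A$, by the Orlik--Solomon formula for these dimensions); since $\sum_P\binom{m_P}{2}=\binom n2$, they also determine the integer
\[
S:=\sum_{m_P\ge 3}\binom{m_P-1}{2}=\binom n2-\sum_P(m_P-1).
\]
Next, the first resonance variety $\R_1(\A)\subseteq A^1$ --- the set of $a\in A^1$ for which the complex $(A^{\bullet},\,a\cdot)$, with differential given by left multiplication by $a$, has non-zero cohomology in degree $1$ --- depends only on $A(\A)$. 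By the theorem of Libgober and Yuzvinsky its irreducible components are linear subspaces meeting pairwise only in $0$, and by Theorem \ref{t3} together with the standard description of the local components they are exactly: the subspaces $W_P$ attached to the points $P$ with $m_P\ge 3$, pairwise distinct and with $\dim W_P=m_P-1$; and, besides these, finitely many non-local components, all of dimension $2$. Consequently the irreducible components of $\R_1(\A)$ of dimension $\ge 3$ are precisely the $W_P$ with $m_P\ge 4$, so recording their dimensions recovers the sub-multiset $\{\,m_P : m_P\ge 4\,\}$ of $\M$; and then the number of triple points is forced, since
\[
\#\{P : m_P=3\}=S-\sum_{m_P\ge 4}\binom{m_P-1}{2},
\]
the right-hand side being now known. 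This determines $\M$, hence, with $n$ and the previous step, the polynomial $T_\A$.

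Two points deserve comment. First, the argument uses the hypothesis $\A\in\CC_2$ only through the sharp form of Theorem \ref{t3}, which forces every non-local component of $\R_1(\A)$ to be $2$-dimensional; this is exactly what makes the components of dimension $\ge 3$ detect precisely the points of multiplicity $\ge 4$, each once. For general line arrangements non-local components of dimension $\ge 3$ do occur, and the hypothesis cannot be dropped. Second --- and this is where one might expect an obstruction --- the $2$-dimensional local components (from triple points) cannot be told apart from the non-local (parallelogram) ones by inspecting $\R_1(\A)$ alone; the proof side-steps this by recovering the number of triple points arithmetically, from the single constraint $S$ read off from $\dim A^2$ once the multiplicities $\ge 4$ are known. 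The remaining ingredients --- that $\R_1$ depends only on $A(\A)$, the standard shape of the local components, and the Orlik--Solomon formula for $\dim A^2$ --- are routine.
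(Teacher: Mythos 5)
Your argument is correct, but it takes a different (self-contained) route from the paper: the paper disposes of Corollary \ref{c0} in one line, by combining Theorem \ref{t0} (equivalently Theorem \ref{t3}: all non-local components of $\R_1(\A)$ are $2$-dimensional) with Theorem 3.16 of Falk's paper \cite{F}, which is exactly the general statement that under such a constraint on the resonance variety the Orlik--Solomon algebra determines the point multiplicities and hence the Tutte polynomial. What you have done, in effect, is reprove the special case of Falk's theorem that is needed here: you recover $n=\dim A^1$, read off the points of multiplicity $\ge 4$ from the components of $\R_1(\A)$ of dimension $\ge 3$ (which, by Theorem \ref{t3} and Remark \ref{r3}, are exactly the local components $W_P$ with $\dim W_P=m_P-1\ge 3$), and then resolve the genuine ambiguity between triple-point components and parallelogram components arithmetically, via $\dim A^2=\sum_P(m_P-1)$ and the identity $\sum_P\binom{m_P}{2}=\binom n2$; finally you observe that the Whitney rank counts $N_{r,s}$ of a rank-$3$ simple matroid depend only on $n$ and the multiset of multiplicities $\ge 3$. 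All of these steps are sound; the only points you pass over as ``routine'' --- that $\R_1$ is an invariant of the abstract graded algebra $A(\A)$ (including the standard passage between the central algebra and the affine description of Theorem \ref{t3}), and that each local $W_P$ with $m_P\ge 3$ really is an irreducible component, distinct for distinct $P$ --- are indeed standard (Libgober--Yuzvinsky, Cohen--Suciu). So the gain of your version is a citation-free, elementary proof that makes visible exactly where the hypothesis $\A\in\CC_2$ enters (excluding non-local components of dimension $\ge 3$), at the cost of duplicating an argument the paper simply imports from \cite{F}.
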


This claim follows from Theorem \ref{t0} and Theorem 3.16 in Falk's paper \cite{F}.

\medskip

In the second section we recall the notion of admissible local system and  the fact that any local system of rank one on the complement of a line arrangement in the classes $\CC_1$ and $\CC_2$ is admissible.

In the third section we recall the notions of characteristic and resonance varieties, and we reobtain the description of the characteric varieties for the class $\CC_1$ given in \cite{CDP} using the new approach
described above.

In the final section we prove the main result of this paper which is the description of the characteristic and resonance varieties for the class $\CC_2$.

\section{Admissible rank one local systems} \label{s1}

Let $\A=\{H_0,H_1,...,H_n\}$ be a line arrangement in $\PP^2$ and set $M=\PP^2 \setminus (H_0 \cup...\cup H_n)$. Let $\T(M)=\Hom(\pi_1(M),\C^*)$ be the character variety of 
$M$. This is an algebraic torus
$\T(M) \simeq (\C^*)^{n}$. Consider the exponential mapping
\begin{equation} 
\label{e1}
\exp :H^1(M,\C) \to H^1(M,\C^*)=\T(M)
\end{equation}
induced by the usual exponential function $\exp(2 \pi i -): \C \to \C^*$. 

Clearly one has 
$\exp(H^1(M,\C))=\T(M)$
and  $\exp(H^1(M,\Z))=\{1\}$.

More precisely, a rank one local system $\LL\in \T(M)$ corresponds to the choice of some monodromy complex numbers
$\lambda _j \in \C^*$ for $0 \leq j \leq n$ such that $\lambda _0 ...\lambda _n=1$. And a cohomology class
$\alpha \in H^1(M,\C)$ is given by
\begin{equation} 
\label{e2}
\alpha=\sum_{j=0,n}a_j\frac {df_j}{f_j}
\end{equation}
where the residues $a_j \in \C$ satisfy $\sum_{j=0,n}a_j=0$ and $f_j=0$ a linear equation for the line $H_j$. With this notation, one has
$\exp (\alpha)=\LL$ if and only if $\lambda _j =\exp(2\pi i a_j)$ for any $j=0,...,n$.

\begin{definition} \label{d1}
A local system $\LL \in \T(M)$ as above is admissible if there is a cohomology class $\alpha \in H^1(M,\C)$ such that $\exp(\alpha)=\LL$, $a_j\notin \Z_{>0}$ for any $j$ and, for any point $p \in H_0 \cup...\cup H_n$ of multiplicity at least 3 one has
$$a(p)=\sum_ja_j \notin \Z_{>0}.$$
Here the sum is over all $j$'s such that $p \in H_j$.

\end{definition}

\begin{rk} \label{r1}
When $M$ is a hyperplane arrangement complement  one usually defines the notion of an
{\it admissible} local system $\LL$ on $M$ in terms of some conditions on the residues of an associated
logarithmic connection $\nabla(\al)$ on a good compactification of $M$, see for instance \cite{ESV},\cite{STV}, \cite{F} , \cite{LY} and \cite{DM}.
For such an admissible local system $\LL$ on $M$ one has
$$ \dim H^k(M,\LL)=\dim H^k(H^*(M,\C), \alpha \wedge)$$
for all $k$. 

For the case of line arrangements such a good compactification is obtained  by blowing-up the points of multiplicity at least 3 in $\A$. This explains the simple version of this definition given above.

\end{rk}

The following result was obtained by S. Nazir and Z. Raza \cite{NR}.

\begin{prop} \label{p0}

If $\A$ is a line arrangement in the classes $\CC_1$ or $\CC_2$, then any local system $\LL \in \T(M)$
is admissible.

\end{prop}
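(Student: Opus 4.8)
The plan is to prove Proposition \ref{p0} by verifying the admissibility condition of Definition \ref{d1} directly, for an arbitrary local system $\LL \in \T(M)$. Given $\LL$, corresponding to monodromy numbers $\lambda_0,\dots,\lambda_n$ with $\prod \lambda_j = 1$, the task is to produce a cohomology class $\alpha = \sum_j a_j \frac{df_j}{f_j}$ with $\sum_j a_j = 0$, $\exp(\alpha) = \LL$, and such that $a_j \notin \Z_{>0}$ for all $j$ and $a(p) = \sum_{p \in H_j} a_j \notin \Z_{>0}$ for every point $p$ of multiplicity $\geq 3$. The key observation is that each $\lambda_j$ determines $a_j$ only up to an integer shift: we are free to choose, for each $j$, any representative $a_j$ in a fixed residue class mod $\Z$ (subject only to the global constraint $\sum_j a_j = 0$, which can be arranged since $\prod \lambda_j = 1$ forces $\sum_j a_j \in \Z$, and we may absorb the discrepancy into one coordinate). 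So the proof amounts to showing these integer ambiguities can be spent to dodge all the forbidden sets $\Z_{>0}$.

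The first step is to handle the lines: for each $j$ such that $\lambda_j = 1$ we must take $a_j \in \Z$, and the only dangerous choice is $\Z_{>0}$, so pick $a_j \in \Z_{\leq 0}$ (e.g. $a_j = 0$, or a negative integer if needed for bookkeeping); for $\lambda_j \neq 1$ the class is non-integral and any representative works, so there is no constraint from the lines alone beyond the global sum. The real work is with the multiple points. By hypothesis every point $p$ of multiplicity $\geq 3$ lies on $H_0$ or on $H_\infty$ (in class $\CC_2$; only on $H_0$ in class $\CC_1$). Consider first the points on $H_0$: such a point $p$ lies on $H_0$ and on some subset of the other lines; as we vary the integer representative of $a_0$ (the residue attached to $H_0$), the value $a(p) = a_0 + \sum_{p \in H_j, j \neq 0} a_j$ shifts by the same integer for all these points simultaneously, while the non-$H_0$ contributions have their own integer freedoms. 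The plan is to first fix all $a_j$ for $j \neq 0, \infty$ so as to make each partial sum $\sum_{p \in H_j, j \neq 0, \infty} a_j$ avoid a suitable range, then choose the representatives for $a_0$ and $a_\infty$ last. Because $H_0$ is a single line, a point on it that is not on $H_\infty$ gets its $H_\infty$-free, $H_0$-free sum from lines meeting $H_0$ only at that point — and distinct such multiple points on $H_0$ involve disjoint sets of "slope" lines — so their constraints are independent and can be satisfied one at a time.

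Concretely, I would proceed as follows. Enumerate the multiple points on $H_0$ as $p_1, \dots, p_r$ and those on $H_\infty$ as $q_1, \dots, q_s$ (a point lying on both $H_0$ and $H_\infty$, i.e. $H_0 \cap H_\infty$, is treated separately and is the one genuinely coupled point). For each line $H_j$ with $j \neq 0, \infty$ and $\lambda_j \neq 1$: $H_j$ meets $H_0$ in one point and $H_\infty$ in one point; choose the representative $a_j$ in its residue class to be negative with large absolute value — large enough and generic enough that no sum of such $a_j$'s over lines through a common multiple point can land in $\Z_{>0}$; for $H_j$ with $\lambda_j = 1$ set $a_j = 0$. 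This makes $\sum_{p_k \in H_j,\, j\neq 0,\infty} a_j$ a non-positive (indeed very negative, when nonempty) real number at each $p_k$, and likewise at each $q_\ell$. Now choose the representative $a_0$ of its residue class: since adding a negative integer to $a_0$ only decreases every $a(p_k)$, we can take $a_0$ sufficiently negative that $a(p_k) = a_0 + (\text{negative stuff}) \notin \Z_{>0}$ for all $k$ — and since the "negative stuff" is already $< 0$ when the point is a genuine multiple point involving slope lines, even $a_0 = 0$ may suffice; one must only be careful at a multiple point consisting of $H_0$ together with lines all having $\lambda = 1$, where one instead makes the sum negative by a different allocation. Symmetrically choose $a_\infty$. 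Finally, enforce $\sum_j a_j = 0$: this is a single linear equation; we have already used up most freedoms, but we can restore it by shifting, say, $a_\infty$ by the needed integer — and then re-examine only the points on $H_\infty$, repeating the downward adjustment there. The main obstacle, and the point requiring genuine care, is the coupling at the point $H_0 \cap H_\infty$ (when it is a multiple point) together with the global constraint $\sum a_j = 0$: here $a(H_0 \cap H_\infty)$ involves both $a_0$ and $a_\infty$ and the lines through that point, and one cannot independently push $a_0$ and $a_\infty$ down without affecting the sum. The resolution is to observe that we have a whole lattice of integer translations available on the non-$H_\infty$, non-$H_0$ lines to absorb slack in $\sum_j a_j$, so that $a_0$ and $a_\infty$ can be chosen freely (hence negatively) and the global equation balanced elsewhere; a clean way to organize this is to fix $a_0$ and $a_\infty$ first to convenient non-positive values, fix the $\lambda_j = 1$ lines to $0$, and then solve for the remaining $a_j$ (those with $\lambda_j \neq 1$) within their residue classes subject both to the global sum and to the finitely many point-conditions, which is possible because each such $H_j$ passes through exactly two multiple points (one on $H_0$, one on $H_\infty$) so the conditions form a sparse, triangularizable system in the integer shift variables.
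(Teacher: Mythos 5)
First, a point of comparison: the paper itself gives no proof of Proposition \ref{p0} --- it is quoted from Nazir and Raza \cite{NR} --- so your attempt is in effect a re-proof of their theorem. Your general framework is the right (and only) one: the residues $a_j$ are determined mod $\Z$ by $\LL$, a condition $a_j\notin\Z_{>0}$ or $a(p)\notin\Z_{>0}$ is vacuous unless the corresponding monodromy (product) equals $1$, and the whole problem is to spend the integer ambiguities so that every integrally-valued sum is $\le 0$ while keeping $\sum_j a_j=0$ exactly.

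The gap is that this last step --- the only nontrivial content of the statement --- is not actually proved. Your first scheme (take the $a_j$ ``negative with large absolute value'', then restore $\sum_j a_j=0$ by shifting $a_\infty$, then ``re-examine the points on $H_\infty$, repeating the downward adjustment'') is circular: any downward adjustment at $H_\infty$ or at a line through a multiple point at infinity re-violates the global sum, and you give no termination argument. Your second scheme (fix $a_0,a_\infty$ first, then solve a ``sparse, triangularizable'' integer system for the remaining shifts) asserts solvability precisely where the theorem lives. In fact no slack-based or greedy argument can work, as the parallelogram configuration central to this paper shows: take $\A=\{H_0,H_\infty,H_k,H_l,H_p,H_q\}$ with $H_k\parallel H_l$, $H_p\parallel H_q$, $H_k\cap H_p\in H_0$, $H_l\cap H_q\in H_0$, and $\LL$ a generic point of the non-local component ($\lambda_{H_k}=\lambda_{H_q}$, $\lambda_{H_l}=\lambda_{H_p}$, $\lambda_{H_0}\lambda_{H_k}\lambda_{H_p}=1$, hence $\lambda_{H_\infty}=\lambda_{H_0}$). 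All four triple points (two on $H_0$, two on $H_\infty$) have trivial local monodromy product, so the four point-sums are integers constrained to be $\le 0$; but each of the six lines lies on exactly two of these four points, so the four sums add up to $2\sum_j a_j=0$, forcing every one of them to be exactly $0$. Thus the admissible lifts form a rigid set with no room to ``push things negative'', and existence of a solution (which does hold, here and in general) requires a genuine combinatorial argument exploiting that each $H\neq H_0,H_\infty$ meets at most two multiple points --- the case analysis carried out in \cite{NR}, which your outline replaces by the unproved word ``triangularizable''. Two smaller inaccuracies feed this: a line need not pass through exactly two multiple points (either of its intersections with $H_0$, $H_\infty$ may be a node, and a line through $H_0\cap H_\infty$ meets both distinguished lines at a single point), and ``generic'' choices are not available, since each $a_j$ is fixed mod $\Z$ and only its integer part can be tuned.
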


\section{Characteristic and resonance varieties} \label{s3}

To go further, we need the characteristic and resonance varieties, whose definition is recalled below.
The {\em characteristic varieties}\/ of $M$ 
are the jumping loci for the cohomology of $M$, with 
coefficients in rank~$1$ local systems:
\begin{equation} \label{e3}
\V^i_k(M)=\{\rho \in \T(M) \mid \dim H^i(M, \LL_{\rho})\ge k\}.
\end{equation}
When $i=1$, we use the simpler notation $\V_k(M)=\V^1_k(M)$.

The {\em resonance varieties}\/ of $M$ 
are the jumping loci for the cohomology of the complex  $H^*(H^*(M,\C), \alpha \wedge)$, namely:
\begin{equation} \label{e4}
\R^i_k(M)=\{\alpha \in H^1(M,\C) \mid \dim H^i(H^*(M,\C), \alpha \wedge)\ge k\}.
\end{equation}
When $i=1$, we use the simpler notation $\R_k(M)=\R^1_k(M)$.

\begin{ex} \label{ex0}
We consider the resonance and characteristic varieties for a central arrangement $\A$ of $n$ lines 
$H_1$,...,$H_n$ in $\C^2$. In other words, we have chosen the line $H_0$ to be the line at infinity.
Let $\omega_j=\frac {df_j}{f_j}$ ($j=1,\ldots,n$) be the canonical genarators of $H^1(M,\C)$ associated to this choice. Let $\alpha=\sum_{j=1}^n a_j\omega_j$, $\beta=\sum_{j=1}^nb_j\omega_j$ be two elements of $H^1(M,\C)$.

When $n=1$, $M$ has the homotopy type of the circle $S^1$ and hence, $\R_1(M)=\{0\}$,     $\R_k(M)=\emptyset$ for all $k>1$, $\V_1(M)=\{1\}$ and $\V_k(M)= \emptyset $ for all $k>1$.

When $n=2$, $M$ has the homotopy type of the real torus $S^1 \times S^1$. It follows that                the group $H^2(M,\C)$ is generated by $\omega_1\wedge\omega_2$ and hence $\alpha\wedge\beta=0$ iff $a_1b_2-a_2b_1=0$, i.e $\beta$ and $\alpha$ are collinear. We get  $\R_j(M)=\{0\}$ for $j=1,2$,    $\R_k(M)=\emptyset$ for all $k>2$, $\V_j(M)=\{1\}$ for $j=1,2$  and $\V_k(M)= \emptyset $ for all $k>2$.

When $n>2$, note that $\omega_1\omega_2,\ldots,\omega_1\omega_n$ form a basis of $H^2(M,\C)$. We find that, for $\alpha \ne 0$, $\dim H^i(H^*(M,\C), \alpha \wedge)$ is equal to either 0 or $n-2$, see for instance Lemma 3.1 in \cite{LY}. The later case occurs iff $\sum_{i=1}^na_i=0$  Hence, $\R_k(M)=\{\alpha|\sum_{i=1}^na_i=0\}$ for $0<k\le n-2$,   $\R_k(M)=\{0\}$ for $k=n-1,n$ and $\R_k(M)=\emptyset$ for the other $k$.

It is known that $\V_k(M)=\{\lambda \in \T(M)=(\C^*)^n ~~|~~\prod_{i=1}^n\lambda_i=1\}$ for $0<k\le n-2$,  $\V_k(M)=\{1\}$ 
for $k=n-1,n$ and $\V_k(M)=\emptyset$
for the other $k$. For a general approach to this question see Proposition 6.4.3 in \cite{D1}.
\end{ex}

\begin{ex} \label{ex1}
Here we give the description of the resonance and characteristic varieties for a nodal arrangement $\A$  in $\PP^2$. As we mentioned in the Introduction, $\pi_1(M)=\Z^n$ where $|\A|=n+1$. On the other hand, the first resonance and characteristic varieties 
depend only on the fundamantal group, \cite{DPS1}. It follows that we can replace $M$ by $(\C^*)^n$ and hence this easily yields
$\R_k(M)=0$ and $\V_k(M)=1$ for $0<k \leq n$.

\end{ex}

The more precise relation between the resonance and characteristic varieties can be summarized as follows, see  \cite{CS} or \cite{DPS1} for a more general result.

\begin{thm} \label{t1} Assume that $M$ is a hyperplane arrangement complement. Then the irreducible components $E$ of the resonance variety
$\R_1(M)$ are linear subspaces in $H^1(M,\C)$ and the exponential mapping \eqref{e1} sends these irreducible components $E$ onto the irreducible components $W$ of $\V_1(M)$ with $1 \in W$.
\end{thm}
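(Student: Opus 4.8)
\emph{Proof proposal.} The plan is to deduce Theorem~\ref{t1} from two deep structural results about the complement of a hyperplane arrangement. \textbf{(a)} Arapura's theorem (see the discussion in \cite{DPS1}): every irreducible component $W$ of $\V_1(M)$ containing the trivial character $1$ is an algebraic subtorus of $\T(M)$ — indeed Arapura's theorem describes the components of $\V_1(M)$ as cosets $\rho\cdot T$ of subtori $T$ with $\rho$ of finite order, and such a coset contains $1$ only if $\rho\in T$, i.e.\ only if it equals $T$. \textbf{(b)} The tangent cone theorem (\cite{CS}, \cite{DPS1}): $\R_1(M)$ coincides with the tangent cone at $1$ of the characteristic variety $\V_1(M)$. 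I would emphasize that (b) is available here because arrangement complements are formal spaces (Brieskorn, Orlik--Solomon), which is precisely the hypothesis under which the tangent cone theorem for $\V^1_k$ is proved in \cite{DPS1}.

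First I would record the consequence of (a): a subtorus $W\subseteq\T(M)$ has the form $W=\exp(E_W)$ for a unique $\Q$-rational linear subspace $E_W\subseteq H^1(M,\C)$, namely $E_W=T_1W$, the tangent space to $W$ at $1$; and since $W$ is smooth at $1$, its tangent cone at $1$ is exactly $E_W$. Next, combining (b) with the elementary fact that the tangent cone at a point of a finite union of closed subvarieties is the union of the tangent cones of the members passing through that point (the components of $\V_1(M)$ not containing $1$ contribute nothing at $1$), I obtain
\[
\R_1(M)\;=\;\mathrm{TC}_1\bigl(\V_1(M)\bigr)\;=\;\bigcup_{W\ni 1} E_W ,
\]
the union over the finitely many irreducible components $W$ of $\V_1(M)$ with $1\in W$. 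In particular $\R_1(M)$ is a finite union of linear subspaces of $H^1(M,\C)$, and each such subspace is defined over $\Q$.

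It then remains to identify the $E_W$ with the irreducible components of $\R_1(M)$ and to check that $W\mapsto E_W$ inverts the exponential map. For this I would verify that the subspaces $E_W$ are pairwise incomparable: if $E_W\subseteq E_{W'}$ then, applying $\exp$ and using $\exp(E_W)=W$, $\exp(E_{W'})=W'$, one gets $W\subseteq W'$, hence $W=W'$ since both are irreducible components of $\V_1(M)$; in particular distinct components $W$ yield distinct $E_W$. Consequently the distinct subspaces $E_W$ are precisely the irreducible components of the union $\bigcup_{W\ni 1}E_W=\R_1(M)$, the assignment $W\mapsto E_W$ is a bijection from the set of components of $\V_1(M)$ through $1$ onto the set of components $E$ of $\R_1(M)$, and the relation $\exp(E_W)=W$ is exactly the asserted correspondence.

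The genuinely hard inputs are (a) and (b), which I would simply quote: Arapura's theorem rests on the quasi-projectivity of $M$, the theory of admissible maps onto curves, and Hodge theory, while the identification of $\R_1(M)$ with the tangent cone at $1$ of $\V_1(M)$ uses the formality of arrangement complements and the full strength of the tangent cone theorem of \cite{CS} and \cite{DPS1} (Libgober's inclusion $\mathrm{TC}_1(\V_1(M))\subseteq\R_1(M)$ is comparatively soft; the reverse inclusion is the subtle one). One could alternatively obtain the linearity of $\R_1(M)$ directly from the combinatorial structure of the Orlik--Solomon algebra, but routing everything through (a) and (b) is what simultaneously yields the precise relationship with $\V_1(M)$; granting these two theorems, the argument above is purely formal.
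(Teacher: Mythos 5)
Your argument is correct: granting Arapura's theorem and the tangent cone theorem, the formal bookkeeping (tangent cone of a finite union, incomparability of the subspaces $E_W$ via $\exp(E_W)=W$) does yield the stated bijection between components of $\R_1(M)$ and components of $\V_1(M)$ through $1$. The paper itself offers no proof but simply quotes the result from \cite{CS} and \cite{DPS1}, and your derivation is exactly the standard route underlying those references, so this is essentially the same approach.
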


\medskip

One has also the following result, see \cite{Dadm}, Remark 2.9 (ii).

\begin{thm} \label{t2} If any local system in $\T(M)$ is admissible then for any $k$ one has the following.

\medskip

\noindent (i) There are no translated components in the characteristic variety $\V_k(M)$ , and

\medskip

\noindent (ii) For any irreducible component $W$ of some characteristic variety $\V_k(M)$, the dimension of
$H^1(M,\LL)$ is constant for $\LL \in W \setminus \{1\}$.

\end{thm}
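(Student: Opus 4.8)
The plan is to prove both statements at once by showing that, under the admissibility hypothesis, each characteristic variety $\V_k(M)$ is a finite union of algebraic subtori of $\T(M)$ all passing through the identity $1$, and that on each of them the cohomology dimension is the (constant) value of the corresponding Aomoto Betti number. The one tool that drives everything is the equality recalled in Remark \ref{r1}: since by hypothesis every $\LL\in\T(M)$ is admissible, for every $\LL$ there is a class $\al\in H^1(M,\C)$ with $\exp(\al)=\LL$, satisfying the residue conditions of Definition \ref{d1}, and such that $\dim H^1(M,\LL)=b(\al)$, where I write $b(\al):=\dim H^1(H^*(M,\C),\al\wedge)$ for the Aomoto Betti number. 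Two features of $b$ are decisive: it is homogeneous, $b(t\al)=b(\al)$ for $t\in\C^*$ (the complexes for $\al$ and $t\al$ are isomorphic), so each $\R_k(M)=\{\al:b(\al)\geq k\}$ is a cone; and, by Theorem \ref{t1} and the corresponding structure theory of resonance varieties of arrangements, each $\R_k(M)$ is a finite union of linear subspaces $E_1,\dots,E_r$.

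First I would establish the identity $\V_k(M)=\bigcup_i \overline{\exp(E_i)}$. For the inclusion $\subseteq$, take $\LL\in\V_k(M)$ and an admissible representative $\al$ as above; then $b(\al)=\dim H^1(M,\LL)\geq k$, so $\al\in\R_k(M)$, hence $\al\in E_i$ for some $i$ and $\LL=\exp(\al)\in\exp(E_i)$. For the reverse inclusion it suffices to show each $\overline{\exp(E_i)}\subseteq\V_k(M)$. The residue conditions $a_j\notin\Z_{>0}$ and $a(p)\notin\Z_{>0}$ exclude only a countable union of proper affine subspaces of $E_i$, so the admissible classes are dense in $E_i$; for such $\al$ one has $\exp(\al)$ admissible with representative $\al$, whence $\dim H^1(M,\exp(\al))=b(\al)\geq k$ because $\al\in E_i\subseteq\R_k(M)$, so $\exp(\al)\in\V_k(M)$. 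Since $\V_k(M)$ is Zariski closed, the closure $\overline{\exp(E_i)}$, which is a subtorus of $\T(M)$, is contained in $\V_k(M)$. This proves the identity, and statement (i) is then immediate: every irreducible component of $\V_k(M)$ is one of the subtori $T_i:=\overline{\exp(E_i)}$, each of which contains $1=\exp(0)$, so there are no translated components.

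It remains to prove the constancy statement (ii) on each component $T_i$. Here I would use that the Aomoto Betti number $b$ is not merely generically constant on the linear component $E_i$ but in fact constant, equal to some $d_i\geq k$, on all of $E_i\setminus\{0\}$: by homogeneity it is constant along punctured lines through the origin, and any further jump locus inside $E_i$ would be a proper subcone forcing those points into a higher resonance stratum, which for arrangements again consists of linear pieces meeting $E_i$ only at $0$. Given a local system $\LL\in T_i\setminus\{1\}$, choosing an admissible representative lying in $E_i\setminus\{0\}$ yields $\dim H^1(M,\LL)=b(\al)=d_i$, which is the desired constant. The hard part is exactly this last reduction: I must guarantee that each $\LL\in T_i\setminus\{1\}$ admits an admissible representative inside the resonance component $E_i$ itself, rather than in some lattice translate of it leaving $E_i$, so that the resonance-side constancy transfers to the torus. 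Controlling this requires a careful fundamental-domain and lattice analysis of the exponential map \eqref{e1} along the subtorus $T_i$, pinning down how the admissible representatives move as $\LL$ varies and checking that crossing the lattice walls (where some residue meets $\Z$) never changes the value of $b$ except possibly at $\LL=1$. This bookkeeping, together with the proof that $b$ is genuinely constant off the origin on each resonance component, is where the main effort of the argument will lie.
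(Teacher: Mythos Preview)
The paper does not prove Theorem \ref{t2}; it simply quotes it from \cite{Dadm}, Remark 2.9(ii), so there is no in-paper proof to compare against. What you have written is therefore not a reconstruction of the paper's argument but an independent sketch of how such a result might be established.

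On the substance: your argument for (i) is essentially correct and is the standard one. The equality $\dim H^1(M,\LL)=b(\al)$ for any admissible representative immediately gives $\V_k(M)\subseteq\exp(\R_k(M))$, and together with the linearity of the resonance components and the tangent-cone theorem (your Theorem \ref{t1}) this forces every component of $\V_k(M)$ to be a subtorus through $1$. One small point: $\exp(E_i)$ is already a closed subtorus when $E_i$ is a rational linear subspace (as resonance components of arrangements are), so the closure is unnecessary.

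For (ii) you have correctly located the real difficulty and, to your credit, you say so explicitly. The constancy of $b$ on $E\setminus\{0\}$ is not a soft consequence of homogeneity; it uses the specific structure theory of arrangement resonance (the filtration-by-dimension property recalled in Remark \ref{r3} together with the fact that distinct components of $\R_1$ meet only at $0$). More seriously, the step ``choose an admissible representative of $\LL\in T_i\setminus\{1\}$ lying in $E_i$'' is exactly where the work is, and your proposal does not carry it out. A cleaner route, which avoids this fundamental-domain bookkeeping, is to argue on the torus side: once (i) is known, $\V_{d+1}(M)$ is again a union of subtori through $1$, so $W\cap\V_{d+1}(M)$ is a union of subtori of $W$ through $1$; combining this with the fact (from the resonance side, via $\exp$) that distinct components of $\V_1$ meet only in finite sets forces $W\cap\V_{d+1}(M)=\{1\}$, giving the constancy. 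Either way, what you have is an outline with the key gap honestly flagged, not yet a proof.
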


The following result was obtained in \cite{CDP}. We provide here an alternative simpler proof.

\begin{prop} \label{p1} (Description of the resonance varieties for the class $\CC_1$)

The irreducible components of $\R_1(\A)$ are vector subspaces $E_i$ one-to-one corresponding to the maximal families $\A_i$ of parallel lines in $\A$ with $\#\A_i>1$. In particular, dim$E_i=\#\A_i$.

\end{prop}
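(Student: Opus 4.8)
The plan is to translate the problem into a concrete computation in the Orlik--Solomon algebra $A = H^*(M,\C)$ of the arrangement $\A$ in the class $\CC_1$, i.e. $n$ affine lines $H_1,\dots,H_n$ in $\C^2 = \PP^2 \setminus H_0$ having only nodes as multiple points. Using the generators $\omega_j = df_j/f_j$ of $A^1$, an element $\alpha = \sum_{j=1}^n a_j \omega_j$ lies in $\R_1(M)$ iff the complex $A^0 \xrightarrow{\alpha\wedge} A^1 \xrightarrow{\alpha\wedge} A^2$ fails to be exact at $A^1$, which (since the first map is injective for $\alpha \neq 0$) amounts to the existence of $\beta = \sum b_j \omega_j$, not proportional to $\alpha$, with $\alpha \wedge \beta = 0$ in $A^2$. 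The relations in $A^2$ for a nodal affine arrangement are generated by $\omega_i \wedge \omega_j$ for those pairs $i \neq j$ such that $H_i$ and $H_j$ are parallel (the node $H_i \cap H_j$ at infinity, i.e. on $H_0$, contributes no relation among the affine generators beyond what a generic pair gives, whereas an actual affine node $H_i \cap H_j$ gives $\omega_i\wedge\omega_j$ as a nonzero but "reducible" class) — more precisely, the only local syzygies come from the pencils of parallel lines, since all honest intersection points in $\C^2$ are double. So I would first write out $\alpha \wedge \beta = \sum_{i<j}(a_i b_j - a_j b_i)\,\omega_i \wedge \omega_j$ and identify exactly which linear combinations of the products $\omega_i\wedge\omega_j$ vanish.

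Next I would exploit the structure of a nodal affine arrangement: group the lines into the maximal parallel classes $\A_1,\dots,\A_r$, where $\A_t$ consists of $m_t = \#\A_t$ mutually parallel lines, and lines in different classes meet in honest (double) affine points. Within a single class $\A_t$, the sub-arrangement is a pencil of $m_t$ parallel lines, whose complement (after adding $H_0$) is that of $m_t$ concurrent lines in $\PP^2$, so by Example~\ref{ex0} its resonance variety $\R_1$ is the hyperplane $\{\sum_{j \in \A_t} a_j = 0\}$ inside the $m_t$-dimensional coordinate subspace $E_t = \spn\{\omega_j : j \in \A_t\}$; wait — one must be careful, since $E_t$ as I want to define it should be the full coordinate subspace, and the claim dim$E_t = \#\A_t$ matches this. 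The key point is that for a nodal arrangement the products $\omega_i\wedge\omega_j$ with $i,j$ in different parallel classes are linearly independent in $A^2$ and independent of the relations internal to each class; this is the combinatorial input, provable either by the NBC-basis description of $A^2$ or by a direct dimension count. Using this, the condition $\alpha \wedge \beta = 0$ decouples: for each cross pair $(i,j)$ with $i \in \A_s$, $j \in \A_t$, $s \neq t$, one needs $a_i b_j = a_j b_i$, which forces $(a_i, b_i)$ and $(a_j, b_j)$ to be proportional vectors in $\C^2$; propagating this across the "complete multipartite" incidence pattern of cross pairs shows that, unless $\alpha$ is supported on a single parallel class, $\beta$ must be proportional to $\alpha$. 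Hence a nonzero $\alpha \in \R_1(M)$ outside the trivial (local) locus must have $\supp(\alpha) \subseteq \A_t$ for some $t$ with $m_t > 1$, and within such a class the vanishing condition on the internal products $\omega_i\wedge\omega_j$, $i,j \in \A_t$, is automatic (they are all multiples of a single class), so in fact every $\alpha$ supported on $\A_t$ lies in $\R_1$.

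From this the proposition follows: the irreducible components of $\R_1(\A)$ are exactly the coordinate subspaces $E_t = \spn\{\omega_j : j \in \A_t\}$ for the maximal parallel classes with $\#\A_t > 1$, each of dimension $\#\A_t$, and they are pairwise distinct and maximal (none is contained in another since the classes partition $\{1,\dots,n\}$), hence they are precisely the irreducible components. I would also remark that when $\#\A_t = 1$ the subspace $E_t$ is a line on which $\alpha \wedge \beta$ never vanishes nontrivially, so such classes contribute nothing, consistent with the statement, and that any component must be one of the $E_t$ because $\R_1$ is a union of linear subspaces (Theorem~\ref{t1}) whose generic point, by the above, is supported on a single parallel class.

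The main obstacle I anticipate is the middle step: proving cleanly that for a nodal affine arrangement the classes $\omega_i \wedge \omega_j$ with $i,j$ in distinct parallel classes, together with the minimal set of internal relations $\sum_{j \in \A_t} a_j \omega_j$ (viewed in each $E_t$), give the complete picture of $\ker(\alpha \wedge)$ — i.e. that there are no unexpected syzygies in $A^2$ mixing several parallel classes. Concretely this is the assertion that $A^2$, as a quotient of $\bigwedge^2 A^1$, has relations supported entirely within the individual parallel pencils; I would establish it by writing down an explicit NBC basis of $A^2$ for the cone over $\A \cup \{H_0\}$ (equivalently, using Example~\ref{ex1} together with the decomposition of the nodal part), and then the rest is the elementary "proportionality propagates across a connected multipartite graph" argument, which is routine.
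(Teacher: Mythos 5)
Your proposal is correct and follows essentially the same route as the paper: expand $\alpha\wedge\beta$ in the basis of $H^2(M,\C)$ given by products $\omega_i\wedge\omega_j$ over intersecting pairs (\cite{OT}, Corollary 3.73, which also settles the ``main obstacle'' you flag, since $\omega_i\wedge\omega_j=0$ for disjoint lines and there are no hidden syzygies), then propagate proportionality of the pairs $(a_k,b_k)$ across the multipartite intersection pattern to conclude $\R_1(\A)=\bigcup_t E_t$. One small correction: within a parallel class the internal products are zero, not ``multiples of a single class,'' which is precisely why the whole coordinate subspace $E_t$ (of dimension $\#\A_t$, rather than the hyperplane $\sum_{j\in\A_t} a_j=0$ suggested by the central model of Example \ref{ex0}) lies in $\R_1(\A)$.
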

\begin{proof}
Keeping the notations in Example \ref{ex0} (in particular $H_0$ is the line at infinity), we have $$\alpha\wedge\beta=(\sum{a_j \omega_j})\wedge(\sum{b_j \omega_j})=\sum_{1\leq i<j\leq n}(a_i b_j - a_j b_i)\omega_i\wedge\omega_j$$
If $\{a\}=H_i\cap H_j$ and since the intersection points in the affine part are double points, the local complement of $\A$ at $a$, denoted by $M_a$, has the cohomology group $H^2(M_a,\C)$ generated by  unique element, namely by (the restriction of). Moreover, the forms $\omega_i\wedge\omega_j$ with and the forms $1\leq i<j\leq n$ and $H_i\cap H_j \ne \emptyset$ yield a basis for $H^2(M,\C)$, see \cite{OT}, Corollary 3.73. 

There are two possibilities to discuss.

\medskip

\noindent (i)  $\A$ contain no parallel lines: then $\omega_i\wedge\omega_j (1\leq i<j\leq n)$ form a basis of  $H^2(M,\C)$. It follows that $\alpha\wedge\beta= 0$ iff $$
\left|
\begin{array}{cc}
a_i & a_j \\
b_i & b_j
\end{array}
\right|=0
\mbox{ for all }
1\leq i< j\leq n.
$$
If $\alpha \ne 0$, then we can assume for instance that $a_1\neq0$. Then we have
$$
\left|
\begin{array}{cc}
a_1 & a_i \\
b_1 & b_i
\end{array}
\right|=0
\Leftrightarrow
\left(
\begin{array}{c}
b_i \\
b_1
\end{array}
\right)
=\lambda_i
\left(
\begin{array}{c}
a_i \\
a_1
\end{array}
\right)
=\frac{b_1}{a_1}
\left(
\begin{array}{c}
a_i \\
a_1
\end{array}
\right)
\Leftrightarrow
\alpha^\bot=\C\langle\alpha\rangle.
$$
Here the orthogonal complement $\alpha^\bot$ is taken with respect to the cup-product.
Therefore  $\mathcal{R}_1(\A)=\{0\}.$

\medskip

\noindent (ii) If there are $s$ families of parallel lines in $\A$: we can write $\A=\A_1\cup\ldots\cup\A_s\cup\A_{s+1}$, where $\A_1,\ldots,\A_s$ are the families of parallel lines
(containing at least 2 lines) and $\A_{s+1}$ consists of the lines which cut all the other lines in $\A$. Let $I_i$ be the index set of $\A_i$ ($\#\A_i>1$ for $i\neq s+1$). Note that if $H_i\parallel H_j$, then $\omega_i\wedge\omega_j=0$. Thus we have 
$$\alpha\wedge\beta=(\sum_{i=1}^{s+1}{\sum_{j_i\in I_i}{a_{j_i}\omega_{j_i}}})\wedge(\sum_{i=1}^{s+1}{\sum_{j_i\in I_i}{b_{j_i}\omega_{j_i}}})
=\sum_{\begin{array}{c}
p<q \\
H_p\cap H_q\neq\emptyset
\end{array}}
{\left|\begin{array}{cc}a_p & a_q \\ b_p & b_q \end{array}\right| \omega_p\wedge\omega_q}.$$
If $a_l\neq0$ for some $l\in I_{s+1}$: by considering the minors $\left|
\begin{array}{cc}
a_l & a_{j_i} \\
b_l & b_{j_i}
\end{array}\right|$, we find as above that $\alpha^\bot=\C\langle\alpha\rangle$.\

If $a_l,a_m\neq0$ for $l\in\A_i\neq\A_j\ni m$: by considering the minors $\left|
\begin{array}{cc}
a_l & a_{j_i} \\
b_l & b_{j_i}
\end{array}\right|$ et $\left|
\begin{array}{cc}
a_m & a_{j_i} \\
b_m & b_{j_i}
\end{array}\right|$, we find that $\alpha^\bot=\C\langle\alpha\rangle$.\

Thus, without losing the generality, now we can assume that $a_{j_i}=0$ for $j_i\notin I_1$. It is easy to check that $\alpha^\bot=\bigoplus_{j_1\in I_1}\C\langle\omega_{j_1}\rangle$, and this space contains strictly $\C\langle\alpha\rangle$.\

So we have 
$$\R_1(\A)=\bigcup_{i\neq s+1}E_i$$
where $E_i=\bigoplus_{j_i\in I_i}\C\langle\omega_{j_i}\rangle.$
In other words, $E_i$ consists exactly of the 1-forms $\alpha$ supported on the lines in the family $\A_i$.
\end{proof}

Using now Theorem \ref{t1} and Theorem \ref{t2}, we recover the following description of the characteristic varieties given in \cite{CDP}.

\begin{cor} \label{c1} (Description of the characteristic varieties for the class $\CC_1$)

$$\V_1(\A)=\{\lambda \in\mathbb{T}(M)\simeq(\C^*)^n~~\vert~~\exists i<s+1 \mbox{ such that }\lambda_j=1\mbox{ }\forall j\notin I_ i\}=$$
$$=(\C^*)^{|\A_1|}\times1\times\ldots\times1\cup1\times(\C^*)^{|\A_2|}\times\ldots\times1\cup\ldots\cup1\times\ldots\times1\times(\C^*)^{|\A_s|}\times\ldots  \times1.$$

\end{cor}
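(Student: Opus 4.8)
The plan is to obtain this as a formal consequence of Proposition \ref{p1} together with Theorems \ref{t1} and \ref{t2}. Since $\A$ lies in the class $\CC_1$, Proposition \ref{p0} tells us that every rank one local system on $M$ is admissible, so Theorem \ref{t2}(i) applies: the characteristic variety $\V_1(M)$ has no translated components, hence every one of its irreducible components $W$ satisfies $1\in W$. By Theorem \ref{t1} the exponential mapping \eqref{e1} sends the irreducible components of $\R_1(M)$ onto the irreducible components of $\V_1(M)$ containing $1$; combining these two facts with the description of $\R_1(\A)$ in Proposition \ref{p1} gives
\[
\V_1(\A)=\bigcup_{i<s+1}\exp(E_i),
\]
where $E_i=\bigoplus_{j\in I_i}\C\langle\omega_j\rangle$ is the subspace of $H^1(M,\C)$ spanned by the forms $\omega_j$ with $H_j\in\A_i$.

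It then remains only to compute each $\exp(E_i)$. Keeping the notation of Example \ref{ex0}, so that $H_0$ is the line at infinity and $\T(M)\simeq(\C^*)^n$ with coordinates $\lambda_j=\exp(2\pi i a_j)$ for $j=1,\dots,n$, take $\alpha=\sum_{j\in I_i}a_j\omega_j\in E_i$. Writing $\alpha$ in the normalized form of \eqref{e2}, one sees that $\exp(\alpha)$ is the character $\lambda$ with $\lambda_j=\exp(2\pi i a_j)$ for $j\in I_i$ and $\lambda_j=1$ for every $j\in\{1,\dots,n\}\setminus I_i$ (and $\lambda_0=\prod_{j\in I_i}\lambda_j^{-1}$, which is not one of the $n$ free coordinates). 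Here one uses that $0\notin I_i$, since the lines of a parallel family $\A_i$ are affine while $H_0$ is at infinity. As the exponential $\C^{\#\A_i}\to(\C^*)^{\#\A_i}$ is surjective, this yields
\[
\exp(E_i)=\{\lambda\in\T(M)\mid \lambda_j=1\ \text{ for all }\ j\notin I_i\}=1\times\cdots\times(\C^*)^{|\A_i|}\times\cdots\times1,
\]
the coordinate subtorus supported on $I_i$. Substituting this into the union above produces exactly the two displayed expressions in the statement.

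There is no serious obstacle here: the substantive work has already been done in Proposition \ref{p1}, and what is left is essentially bookkeeping. The one point deserving a moment's attention is the role of the line $H_0$ at infinity — one must check that $0\notin I_i$, so that $\exp(E_i)$ is the full coordinate subtorus $(\C^*)^{\#\A_i}$ in the slots indexed by $I_i$, rather than a proper subtorus cut out by an extra monomial relation coming from the constraint $\lambda_0\cdots\lambda_n=1$.
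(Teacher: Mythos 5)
Your argument is exactly the paper's: the corollary is deduced from Proposition \ref{p1} via Theorem \ref{t2} (admissibility from Proposition \ref{p0} rules out translated components) and Theorem \ref{t1} (exponential maps the resonance components onto the components of $\V_1$ through $1$), the paper stating this in one line and you merely filling in the routine computation that $\exp(E_i)$ is the coordinate subtorus supported on $I_i$. Your check that $0\notin I_i$, so no extra relation from $\lambda_0\cdots\lambda_n=1$ cuts down the subtorus, is a correct and worthwhile detail; the proposal is sound.
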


The last equality above holds under the assumption that the lines in $\A$, distinct from $H_0$, have been numbered such that $H_i \in \A_p$ and $H_j \in \A_q$ with $p<q$ implies $i<j$.

\begin{rk} \label{rkS} 

The groups $G=\pi_1(M)$ for line arrangements in class $\CC_1$ are of the form
         $$G= \F_{n_1} \times \cdots \times \F_{n_r}$$
         where $\F_m$ denotes the free group on $m$ generators.
Such a group $G$ is the right-angled Artin group $G_{\Gamma}$
corresponding to the complete multi-partite graph
$\Gamma=K_{n_1,\dots,n_r}$.  The resonance varieties
$\R^1_1(G_\Gamma,\K)$ of any right-angled Artin group $G$, and
over any field $\K$, were computed in \cite{PS06}.  The characteristic
varieties $\V^1_1(G_\Gamma,\C)$ of any right-angled Artin group
$G$ were computed in \cite{DPS1}.  And finally, the cohomology
jumping loci $\R^i_k(G_\Gamma, \K)$ and $\V^i_k(G_\Gamma, \K)$,
for all $i$, $k\ge 1$, and over all fields $\K$, were computed in
\cite{PS08}.

\end{rk}

\section{The resonance and the characteristic varieties for the class $\CC_2$} \label{s4}

In this section we use a slightly different notation than above.

Let $\A$ be an {\it affine} line arrangement  whose  points of multiplicity $\ge 3$ lie all on the line $H_0$. Let $\A_0,\ldots,\A_m$ denote the families of parallel lines in $\A$, with $\A^0\ni H_0$
(here $\#\A_j \ge 2$ for $j>0$). Let $\A^1,\ldots,\A^n$ denote the central subarrangements of $\A$ strictly containing $H_0$. These are the local arrangements $\A_x$ based at the multiple points $x$ of $\A$ along $H_0$.

We set $\A^0=\A_0\backslash\{H_0\}$ and note that this arrangement can be empty. 

To complete the picture, one can introduce the line at infinity $H_{\infty}$ and consider the projective arrangement $\A'$ obtained from $\A$ by adding this line. Then $\A'$ belongs to the class $\CC_2$: the points of multiplicity $\ge 3$ lying on the line at infinity $H_{\infty}$ correspond exactly to the families of parallel lines $\A_j$ with
$\#\A_j \ge 2$. However, we do not use this point of view explicitly.

\medskip

Let $\alpha=\sum_{H\in \A}a_H\omega_H$ and $\beta=\sum_{H\in \A}b_H\omega_H$ be two elements of $H^1(M(\A),\C)$. For every intersection point $x$ of the lines in $\A$, we denote 
$$\alpha_x = \sum_{x\in H} a_H\omega_H.$$
The isomorphism between the cohomology algebra $H^*(M(\A),\C)$ and the Orlik-Solomon algebra, and the decomposition of the latter via the poset of $\A$ (see \cite{OT}, Theorem 3.72) imply that $\alpha\beta=0$ iff $\alpha_x\beta_x =0$  for every $x\in L_2(\A)$. This condition is satisfied in the following cases:

\medskip

\noindent ($I$) if $\alpha_x = 0$, then $\beta_x$ can be arbitrary;

\medskip

\noindent ($II$) if $\alpha_x \ne 0$, then either:

 ($II_a$) if $x$ is a double point or $\sum_{x\in H}a_H\neq0$, then $\alpha_x$ and $\beta_x$ must be collinear; 

\medskip

 ($II_b$) if the multiplicity of $x$ is at least 3 and $\sum_{x\in H}a_H=0$, then the orthogonal 
 
 complement of $\alpha_x$ in $H^1(M(\A_x),\C)$ is $\{\beta_x~~\vert~~\sum_{x\in H}b_H =0\}$.

To get the last two claims, recall Example \ref{ex0}.

\bigskip

First we give a criterion for $\alpha$ not to belong to $\R_1(\A)$.
\begin{lem} \label{l1}  (Key Lemma)
Let $H_1\in\A^i\neq\A^j\ni H_2$, $(i,j = 0,\ldots,n)$, be two lines different from $H_0$ such that $H_1\cap H_2\neq \emptyset$ and $a_{H_1},a_{H_2}\neq0$. If there exists a line $H_3\neq H_0$ cutting $H_2$ at double point and $a_{H_3}\neq0$, then $\alpha\notin\R_1(\A)$.

\end{lem}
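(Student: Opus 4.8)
The plan is to argue by contradiction: suppose $\alpha \in \R_1(\A)$, so there exists $\beta \in H^1(M(\A),\C)$ with $\alpha\beta = 0$ and $\beta \notin \C\langle\alpha\rangle$. I will then use the hypotheses to force $\beta$ to be collinear with $\alpha$, contradicting the choice of $\beta$. The mechanism is exactly the trichotomy $(I)$, $(II_a)$, $(II_b)$ recalled just before the Lemma: at every intersection point $x$ the local components $\alpha_x$ and $\beta_x$ are constrained, and by playing off several double points against each other one propagates collinearity from line to line.

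First I would look at the point $x_0 = H_2 \cap H_3$, which is a double point with $a_{H_2}, a_{H_3}\neq 0$, hence $\alpha_{x_0} = a_{H_2}\omega_{H_2} + a_{H_3}\omega_{H_3} \neq 0$; by case $(II_a)$ the vector $\beta_{x_0} = b_{H_2}\omega_{H_2} + b_{H_3}\omega_{H_3}$ must be proportional to $\alpha_{x_0}$, so $b_{H_2} = \lambda a_{H_2}$ and $b_{H_3} = \lambda a_{H_3}$ for a single scalar $\lambda$. Next consider $x_1 = H_1 \cap H_2$, which exists by hypothesis. Since $H_1 \in \A^i \neq \A^j \ni H_2$, these two lines are not parallel and lie in different central subarrangements through $H_0$; in particular $x_1 \neq x_0$ (that would force $H_1, H_2, H_3$ concurrent off $H_0$, contradicting that the multiple points lie on $H_0$) and $x_1$ does not lie on $H_0$. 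One checks that the only lines of $\A$ through $x_1$ are $H_1$ and $H_2$: any third line through $x_1$ would make $x_1$ a point of multiplicity $\geq 3$, forcing $x_1 \in H_0$, impossible. So $x_1$ is a double point. Now $a_{H_1}, a_{H_2} \neq 0$ gives $\alpha_{x_1} \neq 0$, and case $(II_a)$ forces $b_{H_1} = \mu a_{H_1}$, $b_{H_2} = \mu a_{H_2}$ for some scalar $\mu$. Comparing the two expressions for $b_{H_2}$ and using $a_{H_2}\neq 0$ yields $\mu = \lambda$.

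The remaining task is to upgrade this ``$\beta$ and $\alpha$ agree, with the same ratio $\lambda$, on $H_1, H_2, H_3$'' into ``$\beta = \lambda\alpha$ on all of $\A$''. I would set $\gamma = \beta - \lambda\alpha$, so $\gamma$ still satisfies $\gamma\alpha = 0$ (equivalently $\alpha\gamma = 0$) and now $\gamma_{H_1} = \gamma_{H_2} = \gamma_{H_3} = 0$; the goal is $\gamma = 0$. For any line $H \neq H_0$ that meets $H_2$ at a point $y$ where $\alpha_y \neq 0$, case $(II_a)$ (that point is a double point for the same reason as $x_1$, or else $\sum a_H$ at it is irrelevant) makes $\gamma_y$ proportional to $\alpha_y$, but $\gamma_{H_2} = 0$ together with $a_{H_2}\neq 0$ kills the proportionality constant, so $\gamma_H = 0$; iterating this ``contagion'' along chains of double points, and handling the multiple points on $H_0$ via $(II_b)$ (where the orthogonal complement condition $\sum b_H = 0$ is what survives), one reaches every line. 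I expect this propagation step to be the main obstacle, since one must check that the combinatorial hypothesis --- multiple points confined to $H_0$ --- really does guarantee that the ``double-point graph'' on $\A \setminus \{H_0\}$ is connected enough, and that the interaction with the special point behaviour on $H_0$ (case $(II_b)$) does not create a loophole; most likely the cleanest route is to observe that $a_{H_1}, a_{H_2}, a_{H_3} \neq 0$ forces, via $(II_a)$ and $(II_b)$ applied at the points these lines hit, first $\alpha^\perp = \C\langle\alpha\rangle$ locally and then globally, so that in fact $\R_1(\A)$ cannot contain $\alpha$ at all.
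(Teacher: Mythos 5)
Your opening step coincides with the paper's: the two nodes $H_2\cap H_3$ and $H_1\cap H_2$ (the latter is indeed a double point off $H_0$, for essentially the reason you give, although it is the hypothesis $H_1\cap H_2\neq\emptyset$, not the fact that $\A^i\neq\A^j$, that rules out parallelism) force via $(II_a)$ a common ratio $\lambda=b_{H_k}/a_{H_k}$ for $k=1,2,3$. But everything after that is an acknowledged sketch, and the part you defer --- ``iterating this contagion along chains of double points \dots I expect this propagation step to be the main obstacle'' --- is exactly where the content of the Lemma lies. Contagion from $H_2$ alone only reaches lines meeting $H_2$ in a node; it says nothing about lines parallel to $H_2$, lines through the pencil centers on $H_0$ (where the point has multiplicity $\geq 3$, so $(II_a)$ is available only when the local residue sum is nonzero and otherwise one only has the weaker condition $(II_b)$), nor about $H_0$ itself. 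The paper closes precisely this gap by a case analysis on the position of $H_3$ relative to $H_1$ (meeting it in a node, parallel to it, or lying in the same pencil $\A^i$), checking in each case that every $H\in\A\setminus\{H_0\}$ meets one of $H_1,H_2,H_3$ (or an already-treated line) in a node, and then recovering $b_{H_0}=\lambda a_{H_0}$ at a pencil center by splitting into the subcases $\sum a_H\neq 0$ (use $(II_a)$) and $\sum a_H=0$ (use $(II_b)$). None of this is in your proposal, and your closing sentence, that $(II_a)$ and $(II_b)$ ``force $\alpha^\perp=\C\langle\alpha\rangle$ locally and then globally,'' restates the goal rather than proving it.

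Moreover, your hope that the double-point graph is ``connected enough'' cannot be substantiated without using the position of the third line $H_3$: the parallelogram components $\A_{k,l,p,q}$ of Theorem \ref{t3} (Case B1b$'$ in the text) exhibit classes $\alpha$, supported on two intersecting lines from different pencils together with their parallels and $H_0$, for which $\alpha^\perp$ is strictly larger than $\C\langle\alpha\rangle$; there the contagion genuinely stalls, because each side of the parallelogram meets only one other supported line in a node, the remaining intersections being pencil centers on $H_0$ at which the local residue sum vanishes, so only $(II_b)$ applies. (This example also shows the statement must be read with $H_3$ distinct from $H_1$: in the parallelogram the only line cutting one side in a node with nonzero residue is the opposite pencil's side, and taking it as ``$H_3$'' would contradict Theorem \ref{t3}.) So a soft connectivity argument cannot work; the three-way case analysis, or something equivalent using the extra line $H_3$, is unavoidable, and its absence is a genuine gap in the proposal.
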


\begin{proof}
As a consequence  of the hypotheses, we see as in the proof of Proposition \ref{p1} that, for $\beta \in \alpha^\perp$, the quotients $\dfrac{b_{H_k}}{a_{H_k}}$ are equal for $k=1,2,3$.
We treat below the situation when $i>0$ and $j>0$. The remaining cases, i.e. $i=0$ or $j=0$ are simpler, and the reader can treat them essentially using the same approach.

\medskip
 
\noindent {\bf Case 1:} $H_3$ cuts $H_1$ at a double point.
Then for every $ H\in \A\backslash\{H_0\}$,
either 

\noindent (1a) $H\cap H_1$ is a double point: the statement ($II_a$) gives us $\alpha^\perp\subset\big{\{}\beta\big{\vert}b_H=\dfrac{b_{H_1}}{a_{H_1}}a_H\big{\}}$,
or 

\noindent (1b) $H\cap H_2$ is a double point: idem, we have $\alpha^\perp\subset\big{\{}\beta\big{\vert}b_H=\dfrac{b_{H_2}}{a_{H_2}}a_H=\dfrac{b_{H_1}}{a_{H_1}}a_H\big{\}}$\
or 

\noindent (1c) $H\cap H_3$ is a double point: idem, we have $\alpha^\perp\subset\big{\{}\beta\big{\vert}b_H=\dfrac{b_{H_3}}{a_{H_3}}a_H=\dfrac{b_{H_1}}{a_{H_1}}a_H\big{\}}$.

Let $x \in H_0$ be a point of multiplicity $\ge 3$ (such a point exists, otherwise we have an arrangement of class $\CC_1$).
If $\sum_{x\in H}a_H\neq0$, one finds that $\alpha^\perp\subset\big{\{}\beta\big{\vert}b_{H_0}=\dfrac{b_{H_1}}{a_{H_1}}a_{H_0}\big{\}}$. Otherwise, by the statement ($II_b$), for $\beta\in\alpha^\perp$, one has 
$$b_{H_0}=-\sum_{H\in\A_x\backslash\{H_0\}}b_H = -\dfrac{b_{H_1}}{a_{H_1}}\sum_{H\in\A_x\backslash\{H_0\}}a_H =\dfrac{b_{H_1}}{a_{H_1}}a_{H_0}.$$
Thus, $\beta$ is proportional to $\alpha$ and so $\alpha\notin\R_1(\A)$.

\medskip

\noindent{\bf Case 2:} If $H_3\parallel H_1$. Since every $H\in\A^i\backslash\{H_0,H_1\}$ cuts $H_3$ at double point, by the statement ($II_a$), in $\alpha^\perp$, we have $$b_H=\dfrac{b_{H_3}}{a_{H_3}}a_H=\dfrac{b_{H_1}}{a_{H_1}}a_H.$$
If $\sum_{H'\in\A^i}a_{H'}\neq0$, it is clear that we must have $b_{H_0}=\dfrac{b_{H_1}}{a_{H_1}}a_{H_0}=\dfrac{b_{H_2}}{a_{H_2}}a_{H_0}$. If this sum equals zero, the same argument as above also gives $b_{H_0}=\dfrac{b_{H_1}}{a_{H_1}}a_{H_0}$.

For $H\in\A^j\backslash\{H_0\}$,
if $H\cap H_1$ is a point, then $\alpha^\perp\subset\big{\{}\beta\big{\vert}b_H=\dfrac{b_{H_1}}{a_{H_1}}a_H=\dfrac{b_{H_2}}{a_{H_2}}a_H\big{\}}$.

If $H\parallel H_1$, using the value of the sum  $\sum_{H\in\A^j}a_H$ as above, we get $b_H=\dfrac{b_{H_2}}{a_{H_2}}a_H$.

It is easy to get the same relation for $b_H$ for the other lines $H$, since they meet either $H_1$ or $H_2$
in a double point.

\medskip

\noindent{\bf Case 3:} If $H_3\in \A^i$. Every $H\not\in\A^i$ cuts $H_1$ or $H_3$ in a double point, and one deduces that $b_H=\dfrac{b_{H_3}}{a_{H_3}}a_H$. Next, considering $\A^j$, we can prove that $b_{H_0}=\dfrac{b_{H_3}}{a_{H_3}}a_{H_0}$. Finally we deal with the lines in $\A^i$ as in the Case 2 (since there is at most one such lines not meeting $H_2$ in a double point). 

In conclusion $\alpha^\perp$ turns out to be 1-dimensional (spanned by $\alpha$) in all the cases.

\end{proof}

\begin{definition}\label{d2}
Let $\alpha$ be an element of $H^1(M,\C)$. The support of $\alpha$, denoted by $\supp \alpha$, is the set of lines $H\in\A$ with $a_H\neq0$. 
\end{definition}

Let $\alpha \in H^1(M,\C)$ be a non-zero element. We get information on the support of $\alpha$
and we decide when $\alpha \in \R_1(\A)$
in the following careful discussion of various possible situations.

\medskip

\noindent {\bf Case A:} Let us consider first the simple case where $\supp \alpha=\{H_0\}$. Whence $a_{H_0}\neq0$, we find using ($II_a$) that $\alpha^\perp\subset\{\beta ~~\vert~~ b_H =0\text{ for all } H\notin\A_0\}$. In particular $\alpha\in\R_1(\A)$ if and only if  $\#\A_0\geq 2$.

\medskip

\noindent {\bf Case B:}
 Now we assume that $\supp \alpha \ne \{H_0\}$ and take $H_1\in\supp \alpha\setminus \{H_0\}$ such that the pencil $\A^i$ which contains $H_1$ is of maximal cardinal, i.e, whenever $\#\A^j>\#\A^i$, $a_H$ must be zero for all $H$ in $\A^j\backslash\{H_0\}$. 
 
 \medskip

\noindent {\bf Case B1:   $\#\A^i>2$ and $i\neq0$.}

\medskip

\noindent {\bf Case B1a: $\supp \alpha\subset\A^i$ and $i\neq0$.}

If $\sum_{H\in\A^i}a_H=0$, then obviously $\alpha\in\R_1(\A)$.\

If $\sum_{H\in\A^i}a_H\neq0$ and $a_{H_0}\neq0$, then $b_H=\dfrac{b_{H_0}}{a_{H_0}}a_H$ for all $H\in\A$. This follows by applying $II_a$ at each of the points $x_j \in H_0$, the centers of the subarrangements $\A^j$. Since all the lines $H \in \A^0$ intersect $H_1$ at a node, we get the same result for such lines.

If $\sum_{H\in\A^i}a_H\neq0$ and $a_{H_0}=0$, it is required that $b_H=\dfrac{b_{H_1}}{a_{H_1}}a_H$ for all $H\not\parallel H_1$. If there exists another line $H_2\in\A^i$ such that $a_{H_2}\neq0$, since every line parallel to $H_1$ cuts $H_2$ at a double point, we can deduce that $\alpha\notin\R_1(\A)$.\

In the case where supp$\alpha=\{H_1\}$, obviously, dim$\alpha^\perp=\#\{$lines of the same direction with $H_1\}$.\

\medskip

\noindent {\bf Case B1b: $\supp \alpha \setminus \A^i \ne \emptyset$ and $i\neq0$.}

This means that there exists $H_3\in\A^j$ with $j \ne i$, $H_3 \ne H_0$ and $a_{H_3}\neq0$. If
 $\#(\supp \alpha \cap (\A^i\backslash\{H_0\}))\geq 3$, there must be two lines in $\supp \alpha \cap (\A^i\backslash\{H_0\})$ which cut $H_3$ at double points. So by the Key Lemma, in such a case 
 $\alpha \notin\R_1(\A)$.
 
 \medskip

\noindent {\bf Case B1b': $\#(\supp\alpha\cap(\A^i\setminus H_0))=2$ and $i\neq0$.} Assume more precisely that
$\supp\alpha\cap(\A^i\setminus H_0)=\{H_1,H_2\}.$
Let $H\in\A^j$ be a line with $a_{H}\neq0$. Then $H$ cannot simultaneously cut $H_1$, $H_2$
if $\alpha \in\R_1(\A)$ (again by the Key Lemma). Assume that $H\parallel H_1$, then $\dfrac{b_{H_2}}{a_{H_2}}=\dfrac{b_{H}}{a_{H}}$.

If $\#\A^j=2$ (in this case $H=H_3$), for $\beta \in \alpha^\perp$, we have $b_{H_0}=\dfrac{b_{H_3}}{a_{H_3}}a_{H_0}$, and this proportion holds for the lines in $\A^i$ which cut $H_3$, therefore, whatever $\sum_{H\in\A^i}a_H$ is, we also obtain $b_{H_1}=\dfrac{b_{H_3}}{a_{H_3}}a_{H_1}$. Consequently, dim$\alpha^\perp=1$, i.e. $\alpha \notin\R_1(\A)$. 

If $\#\A^j>2$, but $\A^j$ doesn't contain any line parallel to $H_2$, then $a_H=0$  for $H\in\A^j\backslash\{H_0,H_3\}$ (use the Key Lemma) and this implies $b_H=0$. Thus,  whatever $\sum_{H\in\A^j}a_H$ is, we have $b_{H_0}= \dfrac{b_{H_3}}{a_{H_3}}a_{H_0}$ and as above, dim$\alpha^\perp=1$. Idem for the case when there is a line $ H_4\in\A^j$ with $H_4 \parallel H_2$ and $a_{H_4}=0$.

Consider now the case when $\#\A^j>2$ and there is a line $ H_4\in\A^j$ with $H_4 \parallel H_2$ and $a_{H_4}\ne 0$.
The Key Lemma implies that $\supp \alpha\subset\{H_0,H_1,H_2,H_3,H_4\}$. If $\beta\in\alpha^\perp$, then again $\supp\beta\subset\{H_0,H_1,H_2,H_3,H_4\}$.

If $\sum_{H\in\A^j}a_H$ or $\sum_{H\in\A^i}a_H$ is non-zero, then $\alpha$ and $\beta$ must be proportional.
On the other hand, if both these two sums equal zero, then the linear subspace $\alpha^\perp$ is given by the following equations:
\begin{eqnarray}
 b_{H_0}+b_{H_1}+b_{H_2}&=&0\\
b_{H_0}+b_{H_3}+b_{H_4}&=&0\\
a_{H_1}b_{H_4}-a_{H_4}b_{H_1}&=&0\\
a_{H_2}b_{H_3}-a_{H_3}b_{H_2}&=&0
\end{eqnarray}
Computing the minors of the associated matrix shows that the solution space of this system is of dimension $>1$ if and only if $a_{H_1}=a_{H_4}$ and $a_{H_2}=a_{H_3}$.
It follows that the parallelogram $\cP=\{H_1,H_2,H_3,H_4\}$ yields some non-trivial elements in the resonance variety $\R_1(\A)$.

\noindent {\bf Case B1b'': $\mbox{{\rm supp}}\alpha\cap(\A^i\setminus H_0)={H_1}$}

One can assume that for every $\A^j$ there exists at most a line different from $H_0$, whose  associated coefficient is non-zero (otherwise, by  changing the role of $\A^i$ into $\A^j$, we return to the previous case).

If supp$\alpha\subset\{H\parallel H_1\}\cup\{H_1\}$: Whence $a_{H_0}\neq0$, $b_H=\dfrac{b_{H_0}}{a_{H_0}}a_H \mbox{ }\forall H\in\A$ so $\alpha\notin\R_1(\A)$. If $a_{H_0}=0$, dim$\alpha^\perp=\#\{$lines of the same direction with $H_1\}$\

If there exists $H_2\in\A^j$ which intersects $H_1$ with $a_{H_2}\neq0$, in order to get rid of the Cases 1 and 3 in the Key Lemma, $a_H$ must be zero for $H\in\A\backslash\{H_0,H_1,H_2\}$. Under this condition, if there is no line in $\A^j$ parallel to $H_1$, the same argument as in Section 1.2 shows that $\alpha$ and $\beta$ are collinear. (idem for $\A^i$ and $H_2$).\

Now we assume that there is a line in $\A^i$ (resp. $\A^j$) parallel to $H_2$ (resp. $H_1$), if $\sum_{H\in\A^i}a_H\neq0$ or $\sum_{H\in\A^j}a_H\neq0$, $\alpha$ and $\beta$ will be collinear. In the opposite case, i.e, $a_{H_0}=-a_{H_1}=-a_{H_2}$, it is easy to check that $\alpha\in\R_1(\A)$. (Note that this case is degenerated from the case 1.2 when $H_2$ plays the role of $H_3$ and $a_{H_2},a_{H_4}$ equal zero.)

\medskip

\noindent {\bf Case B2: $\#\A^i=2$ and $i\neq0$}

Now supp$\alpha\subset\{H|H\cap H_0\mbox{ in a double point}\}\cup\{H_0\}$. Therefore, if $a_{H'}\neq0$ for some $H'\neq H_0$ which cuts $H_1$ at a double point, ($H'$ cuts $H_0$ at a double point), $\alpha\notin\R_1(\A)$.\

If $a_H=0$ for all $H\in\A\backslash\{H_0\}\mbox{ such that }H\not\parallel H_1$, as above, we see that $a_{H_0}=0$ if $\alpha\in\R_1(\A)$. In this case, dim$\alpha^\perp=\#\{$lines of the same direction with $H_1\}$.

\medskip

\noindent {\bf Case B3: $i=0$ ($H_1\parallel H_0$)}
All the lines $H$ which cut $H_0$ must cut $H_1$ at a double point, so $b_H=\dfrac{b_{H_1}}{a_{H_1}}a_H$. If there is $ H_2\notin\A_0$ such that $a_{H_2}\neq0$, then for every $H\in\A^0$, since $H\cap H_2$ is a double point, $b_H=\dfrac{b_{H_2}}{a_{H_2}}a_H=\dfrac{b_{H_1}}{a_{H_1}}a_H$. Besides, whatever the multiplicity of $H_2\cap H_0$ is, we also have $b_{H_0}=\dfrac{b_{H_2}}{a_{H_2}}a_{H_0}$. Thus, $\alpha$ and $\beta$ are collinear.

If $a_H=0$ for all $H\notin\A_0$, it is easy to see that $\alpha^\perp\subset\{\beta~~\vert~~ b_H =0\text{ for all} H\notin\A_0\}$. In that case $\alpha\in\R_1(\A)$ if and only if  $\#\A_0\geq 2$.\

Thus we have proved the following main result.

\begin{thm} \label{t3} (Description of the resonance varieties for the class $\CC_2$)\

We denote by 
$$\A_{k,l,p,q}=\{H_0\}\cup\{H_k,H_l,H_p,H_q|H_k\parallel H_l, H_p\parallel H_q,H_k\cap H_p\cap H_0\neq\emptyset, H_l\cap H_q\cap H_0\neq\emptyset\}$$
the parallelograms in $\C^2$ constructed with the lines in $\A$ and having $H_0$ as a diagonal.
If $\A^0=\emptyset$, then 
$$\R_1(\A)=\bigcup_{i=1}^n\{\alpha|\supp \alpha\subset\A_i\} \cup \bigcup_{\#\A^j>2}\{\alpha|\sum_{H\in\A^j}a_H=0,\supp \alpha\subset\A^j\}\cup$$
$$\cup \bigcup_{\A_{k,l,p,q}}\{\alpha| a_{H_0}+a_{H_k}+a_{H_p}=0, a_{H_k}=a_{H_q},a_{H_l}=a_{H_p},\supp\alpha\subset\A_{k,l,p,q}\}.$$
Otherwise
$$\R_1(\A)=\bigcup_{i=0}^n\{\alpha|\supp \alpha\subset\A_i\} \cup \bigcup_{\#\A^j>2}\{\alpha|\sum_{H\in\A^j}a_H=0,\supp \alpha\subset\A^j\}\cup$$
$$\cup \bigcup_{\A_{k,l,p,q}}\{\alpha| a_{H_0}+a_{H_k}+a_{H_p}=0, a_{H_k}=a_{H_q},a_{H_l}=a_{H_p},\supp\alpha\subset\A_{k,l,p,q}\}.$$

\end{thm}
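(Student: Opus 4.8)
The plan is to assemble the description of $\R_1(\A)$ from the case analysis carried out above, and then to verify that the subsets listed really are the irreducible components. The overall strategy rests on the criterion recalled at the start of this section: $\alpha\beta=0$ in $H^*(M(\A),\C)$ if and only if $\alpha_x\beta_x=0$ in $H^*(M(\A_x),\C)$ for every rank-two flat $x\in L_2(\A)$, with the three local alternatives $(I)$, $(II_a)$, $(II_b)$. So $\alpha\in\R_1(\A)$ precisely when $\dim\alpha^\perp\ge 2$, i.e. $\alpha^\perp$ strictly contains $\C\langle\alpha\rangle$; the whole point of Cases A--B3 is to decide exactly when this happens. First I would organize the possibilities for a nonzero $\alpha$ by the invariant used above: the maximal cardinality of a pencil $\A^i$ meeting $\supp\alpha\setminus\{H_0\}$, together with whether $H_0$ itself lies in $\supp\alpha$ and whether $\A^0$ is empty. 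This is exactly the stratification Case A / Case B1a / Case B1b / B1b$'$ / B1b$''$ / B2 / B3, and in each stratum the arguments above either force $\alpha^\perp=\C\langle\alpha\rangle$ (so $\alpha\notin\R_1(\A)$) or exhibit $\alpha$ as lying in one of the three advertised families: a form supported on a single parallel-pencil $\A_i$ (Cases A, B1a with supp in $\A^i$ and $i=0$ handled as parallels, B2, B3, and the ``supp$\alpha=\{H_1\}$'' subcases), a form supported on a central pencil $\A^j$ with $\#\A^j>2$ and vanishing weighted sum $\sum_{H\in\A^j}a_H=0$ (Case B1a), or a form supported on a parallelogram $\A_{k,l,p,q}$ with the balance relations $a_{H_0}+a_{H_k}+a_{H_p}=0$, $a_{H_k}=a_{H_q}$, $a_{H_l}=a_{H_p}$ (Case B1b$'$, with B1b$''$ appearing as its degenerate limit). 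I would write this up as: ``$\alpha\in\R_1(\A)\Rightarrow\alpha$ lies in one of the listed sets'' by quoting the relevant case, and conversely ``each listed set is contained in $\R_1(\A)$'' by the explicit computations (for the pencils, Example \ref{ex0}; for the parallelogram, the $4\times$-minor computation of system (4.1)--(4.4)).

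Next I would address why the list splits according to whether $\A^0=\emptyset$. The only difference is the index range of the first union: $\bigcup_{i=1}^n$ versus $\bigcup_{i=0}^n$, i.e. whether the pencil $\A_0$ of lines parallel to $H_0$ (including $H_0$, with $\#\A_0\ge 2$) contributes a component. By Case A and the end of Case B3, forms supported on $\A_0$ lie in $\R_1(\A)$ iff $\#\A_0\ge 2$; when $\A^0=\A_0\setminus\{H_0\}=\emptyset$ one has $\#\A_0=1$ and this family is absent, whereas when $\A^0\ne\emptyset$ it is present. (One should also note the bookkeeping point that in the $\A^0\ne\emptyset$ formula the index $0$ in $\bigcup_{i=0}^n\{\alpha\mid\supp\alpha\subset\A_i\}$ refers to $\A_0$, the full parallel class of $H_0$.) I would make this explicit so the two formulas are seen to be the single statement ``one component per parallel pencil of size $\ge 2$, one per central pencil of size $>2$ with the balanced-sum condition, one per parallelogram with $H_0$ a diagonal.''

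Finally, the last step is to check that the sets on the right-hand side are genuinely the \emph{irreducible components}, i.e. that none is contained in the union of the others and each is irreducible. Irreducibility is immediate since every listed set is a linear subspace of $H^1(M,\C)$ (the parallelogram conditions are linear in the $a_H$). For non-redundancy I would compare supports: two parallel-pencil subspaces $\{\supp\alpha\subset\A_i\}$ and $\{\supp\alpha\subset\A_{i'}\}$ share only forms supported on $\A_i\cap\A_{i'}\subset\{H_0\}$, hence neither contains the other once each has dimension $\ge 2$; a central-pencil component and a parallel-pencil component, or two parallelogram components, or a parallelogram and a pencil component, likewise have distinct generic supports, and a short argument with a generic point of each shows no containment. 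The main obstacle, and the place that needs the most care, is the internal coherence of the case analysis in Case B1 --- specifically making sure that every nonzero $\alpha$ falls into exactly one of B1a, B1b, B1b$'$, B1b$''$, B2, B3 (no $\alpha$ is missed) and that in each of those the dichotomy ``$\dim\alpha^\perp=1$'' vs. ``$\alpha$ in a listed family'' is exhaustive; in particular one must be careful with the degenerate transitions (B1b$''$ as a limit of B1b$'$, and the ``supp$\alpha$ a single line'' subcases) so that no spurious extra component is introduced and none of the claimed ones is dropped. Once that is nailed down, the theorem is just the concatenation of the cases together with the $\A^0=\emptyset$ bookkeeping.
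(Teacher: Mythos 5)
Your proposal is correct and follows essentially the same route as the paper: the paper's proof of Theorem \ref{t3} is precisely the Key Lemma together with the support-based case analysis (Cases A, B1a, B1b, B1b$'$, B1b$''$, B2, B3) that you assemble, including the parallelogram minor computation and the $\A^0=\emptyset$ bookkeeping. The extra verifications you add (linearity/irreducibility of the listed subspaces and mutual non-containment) go slightly beyond what the stated equality requires and are left implicit in the paper (cf.\ Remark \ref{r3}), but they are harmless and correct.
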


\begin{rk} \label{r3}
1. All the components of $\R_1(\A)$ except those coming from the parallelograms $\cP=\A_{k,l,p,q}$
(when they exist) are local components. In fact the first ones in the formulas above are obviously local when we consider the associated projective arrangement $\A'$.

2. It is obvious that
$$\dim \{\alpha|\supp \alpha\subset\A_i\}=\#\A_i,~~~~
\dim \{\alpha|\sum_{H\in\A^j}a_H=0,\supp \alpha\subset\A^j\}=\#\A^j -1,$$
and
$$\dim \{\alpha| a_{H_0}+a_{H_k}+a_{H_p}=0, a_{H_k}=a_{H_q},a_{H_l}=a_{H_p},\supp\alpha\subset\A_{k,l,p,q}\}=2.$$
On the other hand, it is known that the resonance varieties  $\R_k(\A)$ enjoy the filtration by dimension property,
namely $\R_k(\A)$ is the union of the irreducible components $E$ of $\R_1(\A)$ with $\dim E >k$, see for instance \cite{DPS1}.
These two facts and the above Theorem yield a complete description of all the resonance varieties  $\R_k(\A)$.

\end{rk}

We can also define the support of a local system $\lambda$, and denote it by $\supp \lambda$, to be the set of lines $H\in\A$ such that the associated monodromy $\lambda_H\neq1$. 
\begin{cor} \label{c7} (Description of the characteristic varieties for the class $\CC_2$)\

If $\A^0=\emptyset$, then 
$$\V_1(\A)=\bigcup_{i=1}^n\{\lambda |\supp \lambda \subset\A_i\}\cup \bigcup_{\#\A^j>2}\{ \lambda | \prod_{H\in\A^j}\lambda_H=1,\supp\lambda \subset\A^j\} \cup$$
$$\cup \bigcup_{\A_{k,l,p,q}}\{ \lambda |   \lambda_{H_0}\lambda_{H_k}\lambda_{H_p}=1, \lambda_{H_k}=\lambda_{H_q},\lambda_{H_l}=\lambda_{H_p},\supp\lambda \subset\A_{k,l,p,q}\}.$$
Otherwise
$$\V_1(\A)=\bigcup_{i=0}^n\{\lambda |\supp \lambda \subset\A_i\}\cup \bigcup_{\#\A^j>2}\{ \lambda | \prod_{H\in\A^j}\lambda_H=1,\supp\lambda \subset\A^j\} \cup$$
$$\cup \bigcup_{\A_{k,l,p,q}}\{ \lambda |   \lambda_{H_0}\lambda_{H_k}\lambda_{H_p}=1, \lambda_{H_k}=\lambda_{H_q},\lambda_{H_l}=\lambda_{H_p},\supp\lambda \subset\A_{k,l,p,q}\}.$$

\end{cor}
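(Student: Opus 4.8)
The plan is to deduce Corollary \ref{c7} directly from Theorem \ref{t3} by combining two general facts established earlier in the paper: first, Proposition \ref{p0}, which says that every rank one local system on $M(\A)$ is admissible when $\A$ lies in the class $\CC_2$; and second, Remark \ref{r1}, which says that for an admissible local system $\LL = \exp(\alpha)$ one has $\dim H^k(M,\LL) = \dim H^k(H^*(M,\C),\alpha\wedge)$ for all $k$. The immediate consequence is that $\LL_\rho \in \V_1(\A)$ if and only if there exists $\alpha \in H^1(M,\C)$ with $\exp(\alpha) = \rho$ and $\alpha \in \R_1(\A)$; in other words, $\V_1(\A) = \exp(\R_1(\A))$. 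So the corollary is obtained by applying the exponential map \eqref{e1} to each of the three families of linear components of $\R_1(\A)$ listed in Theorem \ref{t3}, and checking that the image of each linear piece is the subtorus (or translated subtorus) claimed in the statement.

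Concretely, I would proceed component by component. For a component of the form $\{\alpha \mid \supp\alpha \subset \A_i\}$, which is the coordinate subspace spanned by the $\omega_H$ with $H \in \A_i$ subject to the single relation $\sum_{H\in\A} a_H = 0$ (recall the residues of a class in $H^1(M,\C)$ sum to zero), the exponential map sends it onto $\{\lambda \mid \supp\lambda \subset \A_i\}$, i.e. the subtorus where $\lambda_H = 1$ for $H \notin \A_i$; here one uses that $\exp$ is surjective from a linear subspace defined by integer (indeed, $\Q$-rational) equations onto the corresponding subtorus, and that the global relation $\prod_H \lambda_H = 1$ is automatic. For a component $\{\alpha \mid \sum_{H\in\A^j} a_H = 0,\ \supp\alpha\subset\A^j\}$ with $\#\A^j > 2$, the linear equation $\sum_{H\in\A^j} a_H = 0$ exponentiates to $\prod_{H\in\A^j}\lambda_H = 1$, giving exactly the second family in the corollary. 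For the parallelogram components $\A_{k,l,p,q}$, the defining equations $a_{H_0}+a_{H_k}+a_{H_p}=0$, $a_{H_k}=a_{H_q}$, $a_{H_l}=a_{H_p}$ are again $\Z$-linear, so their exponential image is cut out by $\lambda_{H_0}\lambda_{H_k}\lambda_{H_p}=1$, $\lambda_{H_k}=\lambda_{H_q}$, $\lambda_{H_l}=\lambda_{H_p}$ together with $\supp\lambda \subset \A_{k,l,p,q}$, which is the third family. Finally, one notes that passing to a union commutes with $\exp$, and that the two cases $\A^0 = \emptyset$ and $\A^0 \neq \emptyset$ are handled uniformly since they differ only in whether the index $i=0$ is included.

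The one point that needs a little care — and the only place where something beyond bookkeeping happens — is the surjectivity of $\exp$ from each linear component of $\R_1(\A)$ onto the asserted subtorus of $\V_1(\A)$, i.e. that $\exp(\R_1(\A))$ is \emph{all} of $\V_1(\A)$ and not merely a dense or identity-component-containing subset. This is where admissibility is used in full strength: Theorem \ref{t2}(i) guarantees there are no translated components, so every irreducible component of $\V_1(\A)$ passes through the identity $1$, and by the equality of Betti numbers from Remark \ref{r1} each such component is the exponential image of an irreducible component of $\R_1(\A)$; conversely each component of $\R_1(\A)$, being a linear subspace defined over $\Q$, maps onto a subtorus, so the correspondence is a bijection between components. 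An alternative route, which I would mention, is to invoke Theorem \ref{t1} (the Cohen--Suciu description in the arrangement case) applied to $\V_1(\A)$, whose components through $1$ are exactly $\exp(E)$ for $E$ a component of $\R_1(\A)$; combined with the absence of translated components from Theorem \ref{t2}(i) this again gives $\V_1(\A) = \exp(\R_1(\A))$, and the explicit description of $\V_1(\A)$ then follows by translating the equations of Theorem \ref{t3} through $\exp$ as above.

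I do not expect a genuine obstacle here: the content of the corollary is entirely contained in Theorem \ref{t3} plus the admissibility machinery recalled in Section \ref{s1} and the beginning of Section \ref{s3}. The proof is essentially a one-paragraph remark — "apply $\exp$ to each component, using Propositions \ref{p0} and Theorem \ref{t1}, Theorem \ref{t2}" — and the only thing to be vigilant about is that the exponential of a $\Q$-linear subspace is the full corresponding subtorus (true because $\exp(H^1(M,\Z)) = \{1\}$ and $H^1(M,\Z)$ spans the subspace over $\R$), together with the remark already made in the text that the global product relation $\prod_H \lambda_H = 1$ and the residue relation $\sum_H a_H = 0$ match up under $\exp$.
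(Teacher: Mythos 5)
Your proposal is correct and follows exactly the route the paper intends: the corollary is deduced from Theorem \ref{t3} by applying the exponential map, with Proposition \ref{p0} and Theorem \ref{t2} ruling out translated components and Theorem \ref{t1} identifying the components through $1$ with exponentials of the resonance components, just as the paper does for Corollary \ref{c1} in the class $\CC_1$. The only slip is cosmetic: in Section \ref{s4} the arrangement is affine, so the forms $\omega_H$, $H\in\A$, are a basis of $H^1(M,\C)$ and there is no global residue relation $\sum_H a_H=0$ to worry about, but this does not affect the argument.
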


\begin{ex} \label{ex3}
Let us consider the
arrangement $\A$ in $\PP^2$ given by the equation $xyz(x-z)(x-y)(y-z)(y
+x-2z)$. After choosing $z=0$ as the line at infinity $H_{\infty}$, the line $x-y$
will take the role of $H_0$ in the above description. Let number the
lines: $H_1:x=0$, $H_2:y=0$, $H_3:x-z=0$, $H_4:y-z=0$, $H_5:x+y-2z=0$
(see the figure below).
\begin{figure}[h]
\begin{center}
\psfragscanon
\includegraphics[scale=0.6]{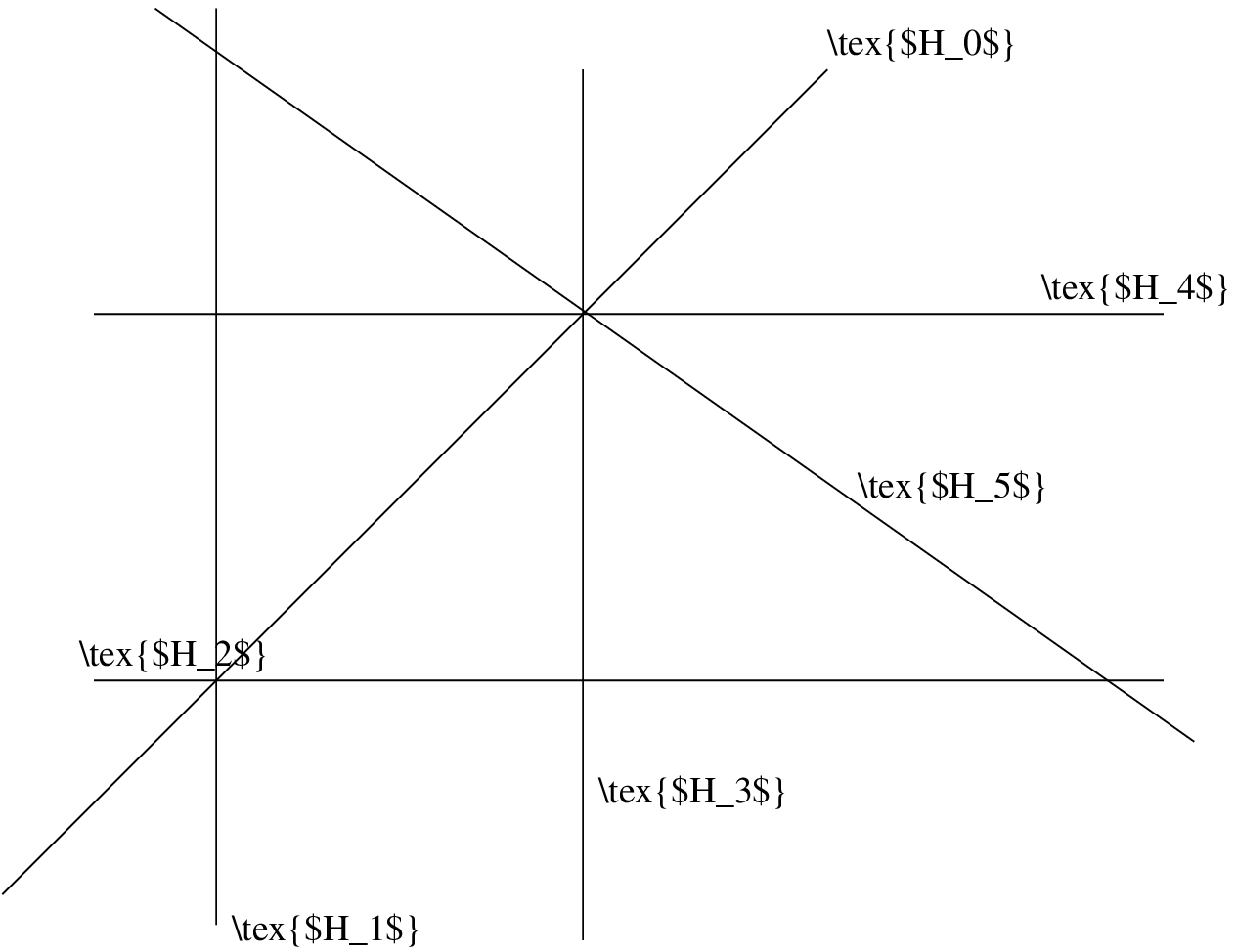}
\end{center}
\end{figure}

By Theorem \ref{t3}, the resonance variety
of $\A$ has the following components:

\noindent $\bullet$ 2 components $E_1$ and $E_2$ corresponding to the two families of parallel lines.

\noindent $\bullet$ 2 components $E_3$ and $E_4$ corresponding  to the central arrangements of cardinal 3
and respectively 4.

\noindent $\bullet$ 1 component $E_5$ corresponding  to the parallelogram determined by the lines $H_1$,
$H_2$, $H_3$ and $H_4$. This component $E_5$ corresponds to a regular
mapping $f_5:M \to \C \setminus \{0,1\}$, see for instance \cite{D2}, given by 
$$f_5(x,y)= \frac{x(y-1)}{y(x-1)}.$$
Note that the fiber over $1 \in \C$ (which is deleted) is precisely the line $H_0$.
If we consider $f_5$ as a pencil of plane curves in $\PP^2$, the corresponding fiber is $H_0 \cup H_{\infty}$, which explains  our remark following Theorem \ref{t0}.

The above components, except $E_4$, are  2-dimensional, $\dim E_4=3$ and they satisfy $E_i \cap E_j=0$ 
for $i \ne j$ (as the general theory predicts).

\end{ex}

\end{document}